\newcommand{\Cinf}{C^{\infty}}
\newcommand{\dpi}{\: \mathrm{d}\pi}
\newcommand{\grad}{\nabla}
\let\div\relax \DeclareMathOperator{\div}{div}
\DeclareMathOperator{\Ran}{Ran}
\newcommand{\rapi}{\ra_{\pi}}
\newcommand{\HC}{\cH_{\C}}
\newcommand{\Lpi}{\cL^2(\pi)}
\newcommand{\Hpi}{\cH^2(\pi)}
\newcommand{\Wpi}{\cW^2(\pi)}
\newcommand{\pidx}{\: \pi(\ddx)}
\newcommand{\normpi}[1]{\|#1\|_{\pi}}
\title{Variance reduction for diffusions}
\author[1]{Chii-Ruey \textsc{Hwang}}
\author[1]{Raoul \textsc{Normand}}
\author[2]{Sheng-Jhih \textsc{Wu}}
\email{crhwang@sinica.edu.tw; rnormand@math.sinica.edu.tw; szwu@suda.edu.cn}
\begin{document}

\maketitle

\begin{abstract}
The most common way to sample from a probability distribution is to use Markov Chain Monte Carlo methods. One can find many diffusions with the target distribution as equilibrium measure, so that the state of the diffusion after a long time provides a good sample from that distribution. One naturally wants to choose the best algorithm. One way to do this is to consider a reversible diffusion, and add to it an antisymmetric drift which preserves the invariant measure. We prove that, in general, adding an antisymmetric drift reduces the asymptotic variance, and provide some extensions of this result.
\end{abstract}

\keywords Asymptotic variance, rate of convergence, diffusion, acceleration, Monte Carlo method

\MSC Primary: 60J60, 65B99; Secondary: 60J35, 47D07

\section{Introduction}

Sampling directly from a probability distribution is often infeasible in practice, in particular when the distribution is only known up to a multiplicative constant. Markov Chain Monte Carlo (MCMC) methods are popular for obtaining samples from a target probability $\pi$, see e.g. \cite{BrooksMCMC,HastingsMCMC}. The idea is to construct a Markov chain or Markov process $(X_t)$, whose invariant distribution is $\pi$. Under reasonable assumptions (``ergodicity''), the distribution of $(X_t)$ converges to $\pi$, so that, for $t$ large enough, $X_t$ provides a good sample of $\pi$. It is natural that one would prefer the dynamics that attain the desired equilibrium distribution as fast as possible. There are several ways to measure how good an algorithm is: mixing time, spectral gap, asymptotic variance... Hence, comparing the performance of different algorithms also depends on the criterion used.

The discrete case has been extensively studied in the literature, see e.g. \cite{ChenOTM, FrigessiOSS, MiraEFSS, MiraOIP, MiraNRMC, PeskunOMCS}. In general, one can choose reversible or non-reversible algorithms. The latter usually enjoy a faster convergence to equilibrium, but are more challenging to study. More specifically, of special interest to us is the scheme of modifying a given reversible process by a suitable antisymmetric perturbation so that the equilibrium is preserved, but the convergence is accelerated. All evidence points to this strategy always providing a better algorithm, be it in the discrete case \cite{ChenARMC} or in the continuous one \cite{HwangAGD,HwangAD,HwangABM}. In the continuous setting, the performance is understood in terms of the spectral gap of the generator of the diffusion. Our goal in this paper is to show that perturbing a diffusion by an antisymmetric drift also reduces the asymptotic variance. Let us note that the recent paper \cite{Rey-BelletILS} provides a similar result in terms of large deviations, with a corollary for the asymptotic variance. However, the setting is limited to the $d$-dimensional torus, and some details seem to be lacking in the proof.

Let us informally introduce our setting. Let $U$ be a given energy function and $\pi$ the probability with density proportional to $e^{-U(x)}$. If one is interested in sampling from $\pi$, then a usually used diffusion is the time reversible Langevin equation with $\pi$ as equilibrium measure
\begin{equation} \label{eq:diff}
\rmd X_t = \sqrt 2 \: \rmd W_t - \grad U(X_t) \dt,
\end{equation}
where $(W_t)$ is a Brownian motion. Perturbing the reversible diffusion \eqref{eq:diff} by adding an 
antisymmetric drift term results in
\begin{equation} \label{eq:diff2}
\rmd X_t = \sqrt 2 \: \rmd W_t - \grad U(X_t) \dt + C(X_t) \dt,
\end{equation}
where the vector field $C$ is weighted divergence-free with respect to $\pi$, i.e., $\div(Ce^{-U}) = 0$. This ensures that the non-reversible diffusion \eqref{eq:diff2} also has equilibrium $\pi$. That there are many ways to choose such a perturbation $C$, such as taking $C = Q \grad U$ for an antisymmetric matrix $Q$. Note that, in any case, it is unnecessary to know the normalization constant for $\pi$.

Let $-L$ (we use the sign convention to make $L$ positive) denote the infinitesimal generator of \eqref{eq:diff}. More precisely,
\[
L = - \Delta + \grad U \cdot \grad
\]
on, say, $\Cinf_c$, the space of infinitely differentiable functions with compact support. Informally, one may recover that $\pi$ is invariant by noticing that the adjoint operator is $L^* f = - \Delta f - \div(f \grad U)$, so that $L^* \pi = 0$.  This process is reversible, which amounts to saying that $L$ is symmetric in $\Lpi$, the space of square-integrable complex functions with respect to $\pi$. On the other hand, the generator of the modified equation is given by $- L_C$, where
\[
L_C = L - C \cdot \grad.
\]
It has the adjoint $L_C^* f = L^* f + \div (f C)$. Hence, to ensure that the diffusion \eqref{eq:diff2} also has $\pi$ as its invariant measure, 
it is necessary to assume that $L_C^* \pi = 0$, i.e., $ \div(Ce^{-U}) = 0$. In \cite{HwangAD} (see also \cite{HwangAGD}), the spectral gap of these operators is employed as a measurement of the rate of convergence of these algorithms. It is shown therein that $L_C$ has a larger spectral gap than $L$, which is just a way to say that the irreversible algorithm performs better than the reversible one.

Essentially, the spectral gap measures the exponential rate in the convergence of the distribution of $(X_t)$ to $\pi$. We are interested here in comparing these algorithms in terms of asymptotic variance, which rather measures the speed of convergence of the \emph{average} of $f(X_t)$ to the mean $\int f \dpi$. Without dwelling on the technical details, which will be tackled in Section \ref{sec:CLT}, let us explain our results, and the motivation for the next section. Assume that $(X_t)_{t \geq 0}$ is an ergodic Markov process with equilibrium measure $\pi$. Let $\Lpi$ be the space of functions which are square-integrable with respect to $\pi$, with inner product $\la \cdot, \cdot \rapi$. Denote by $-G$ the generator of $(X_t)$ in $\Lpi$, with domain $\cD$. Take $f \in \Lpi$ and assume that there is a solution $h \in \cD$ to the Poisson equation
\begin{equation} \label{eq:Poissoneq}
G h = f.
\end{equation}
Then a central limit theorem holds: 
\[
\sqrt{t} \left ( \frac1t \int_0^t f(X_s) \ds - \int f \dpi \right ) \convlaw \cN(0, \s^2(f))
\]
where
\begin{equation} \label{eq:asvar}
\s^2(f) = 2 \la f, h \rapi
\end{equation}
is called the asymptotic variance (associated with $f$). This explains why the asymptotic variance is a natural gauge of the efficiency of the MCMC algorithm. We refer the readers to \cite{BhattaCLT} or Chapter 2 of \cite{Komorowski} for a more detailed discussion.

The main goal of the present study is to prove that, in the sense of asymptotic variance, the irreversible diffusion \eqref{eq:diff2} converges to equilibrium faster than \eqref{eq:diff}. More precisely, if $\s^2_C(f)$ and $\s_0^2(f)$ denote the corresponding asymptotic variances, we will show that, under mild conditions,
\begin{equation} \label{eq:ineqvar}
\s^2_C(f) \leq \s_0^2(f).
\end{equation}
The precise results are Theorem \ref{th:onRd} and \ref{th:onmanifold}. From \eqref{eq:Poissoneq} and \eqref{eq:asvar}, this amounts to proving that
\begin{equation} \label{eq:ineqop0}
\la L_C^{-1} f, f \rapi \leq \la L^{-1} f, f \rapi,
\end{equation}
and that is merely a result on operators. For this reason, the paper will be organized as follows. In Section \ref{sec:general}, we will provide conditions on some operators for \eqref{eq:ineqop0} to hold. In Section \ref{sec:ex}, we show that, under mild conditions, the diffusions \eqref{eq:diff} and \eqref{eq:diff2} are well-behaved, that the CLT holds, and that their generators enjoy properties that ensure \eqref{eq:ineqop0}. In short, we prove \eqref{eq:ineqvar}, i.e. that the irreversible diffusion \eqref{eq:diff2} converges to $\pi$ faster than the reversible one \eqref{eq:diff}. We study first the case of diffusions on $\R^d$, and then on a compact Riemannian manifold. Section \ref{sec:extensions} provides some extension of our main results: we characterize the cases of equality in \eqref{eq:ineqvar}, study the worst-case analysis, and finally the behavior of $\s^2_C(f)$ when the amplitude of the drift grows, as is done in \cite{FrankeBSG,HwangABM}. Concluding remarks are offered in Section \ref{sec:concl}. 

\section{General results} \label{sec:general}

\subsection{Spectral theorem}

We will rely extensively on the spectral theorem, so let us recall some of its features that we will use. For the results and definitions, see Chapter VIII in \cite{RS1}. Consider a \emph{complex} Hilbert space $\cH$ with an inner product $\la \cdot, \cdot \ra$, and a self-adjoint operator $T$ with domain $\cD$. Let $\s(T) \subset \R$ be its spectrum. Then the following hold.
\begin{enumerate}
\item For any $z \in \C \bsl \R$, we can define the resolvent $(I-z T)^{-1}$, which is a bounded operator on $\cH$.
\item More generally, for any bounded Borel function $\phi \: : \: \s(T) \to \C$, one can define the bounded operator $\phi(T)$, and $\phi \mapsto \phi(T)$ is an algebra homomorphism.
\item For any $v \in \cH$, there exists a Borel measure $\mu_v$, called the spectral measure associated with $v$, such that for any bounded Borel function $\phi \: : \: \s(T) \to \C$,
\[
\la v, \phi(T)(v) \ra = \int_{\s(T)} \phi(s) \: \mu_v(\dds).
\]
In particular $\mu_v(\s(T)) = \la v, v \ra$.
\end{enumerate}

\subsection{Statement of the main inequality}

Recall from the introduction that, informally, we want to prove \eqref{eq:ineqop0}. We shall establish this inequality under the general assumptions \textbf{(G1)}, \textbf{(G2)} and \textbf{(G3)} below. It is worth noticing that this result says that, formally, adding an antisymmetric perturbation to an reversible Markov process decreases the asymptotic variance. It however relies on fine properties of the generators, which are usually hard to check. Hence, we feel that a general result concerning Markov processes would be a mere technicality with no actual application. Instead, we shall devote Section \ref{sec:ex} to proving that, under mild assumptions on $U$ and $C$, the generators of the diffusions \eqref{eq:diff} and \eqref{eq:diff2} do enjoy properties \textbf{(G1)}, \textbf{(G2)} and \textbf{(G3)}, so that the result applies.

We obviously want to consider real functions. Hence, we fix a \emph{real} Hilbert space $\cH$ with an inner product $\la \cdot, \cdot \ra$. Take $S$ to be an (unbounded) operator on $\cH$ with domain $\cD$, $A$ be another operator whose domain contains $\cD$, and let $S_A = S + A$ (with domain $\cD$). 
We assume the following throughout this section.
\begin{itemize}
\item[\textbf{(G1)}] $S$ is symmetric and positive, and $A$ is antisymmetric;
\item[\textbf{(G2)}] $S$ is a bijection from $\cD$ onto $\cH$ with a bounded inverse;
\item[\textbf{(G3)}] $S_A$ and $S_{-A}$ are bijections from $\cD$ onto $\cH$.
\end{itemize}
We shall prove the following.
\begin{theorem} \label{th:ineqop}
Assume \textbf{(G1)}, \textbf{(G2)} and \textbf{(G3)}. Then, for any $f \in \cH$,
\[
\la S_A^{-1} f, f \ra \leq \la S^{-1} f, f \ra.
\]
\end{theorem}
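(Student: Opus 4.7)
The plan is to reduce the inequality to the positivity of $S^{-1}$ via a direct algebraic manipulation. I set $g = S_A^{-1} f \in \cD$, which exists by \textbf{(G3)}, so that $f = S g + A g$. Since $\cH$ is a real Hilbert space and $A$ is antisymmetric, $\langle v, A v\rangle = 0$ for every $v$ in the domain of $A$; applied to $v = g$, this rewrites the left-hand side as
\[
\langle S_A^{-1} f, f\rangle \;=\; \langle g, S g + A g\rangle \;=\; \langle g, S g\rangle.
\]

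Next, by \textbf{(G2)}, the operator $S^{-1}$ is defined and bounded on all of $\cH$, so one can write $S^{-1} f = g + S^{-1} A g$, and
\[
\langle S^{-1} f, f\rangle \;=\; \langle g + S^{-1} A g,\, S g + A g\rangle.
\]
Two of the four resulting cross terms vanish: $\langle g, A g\rangle = 0$ by antisymmetry as above, and $\langle S^{-1} A g, S g\rangle = \langle A g, S^{-1} S g\rangle = \langle A g, g\rangle = 0$, using that $S^{-1}$ is bounded self-adjoint (inherited from the self-adjointness of $S$) together with $S^{-1} S g = g$ for $g \in \cD$. Hence $\langle S^{-1} f, f\rangle = \langle g, S g\rangle + \langle S^{-1} A g, A g\rangle$, and subtracting the two identities yields the clean formula
\[
\langle S^{-1} f, f\rangle - \langle S_A^{-1} f, f\rangle \;=\; \langle S^{-1} A g,\, A g\rangle.
\]

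To conclude, I invoke the spectral theorem: by \textbf{(G1)}--\textbf{(G2)}, $S$ is positive self-adjoint with spectrum bounded away from $0$ (since $S^{-1}$ is bounded), so by the functional calculus $S^{-1}$ is itself a positive operator, and the right-hand side above is nonnegative. I do not anticipate any serious obstacle: the argument is essentially formal, and the only things to verify carefully are the domain issues — in particular that $A g \in \cH$ so that $S^{-1} A g$ makes sense, which is immediate from $\cD \subset \mathrm{dom}(A)$, together with the self-adjointness and positivity of $S^{-1}$, both standard consequences of the spectral theorem. It is also worth noting that this same identity pinpoints exactly when equality holds, namely when $A g \in \ker(S^{-1}) = \{0\}$, which will presumably underlie the equality discussion in Section \ref{sec:extensions}.
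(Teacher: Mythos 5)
Your proof is correct, and it takes a genuinely different and more elementary route than the paper. You establish the exact algebraic identity
\[
\la S^{-1} f, f \ra - \la S_A^{-1} f, f \ra = \la S^{-1} A g, A g \ra, \qquad g = S_A^{-1} f,
\]
by killing two cross terms via antisymmetry of $A$ and self-adjointness of $S^{-1}$, and then invoking positivity of $S^{-1}$ (which, incidentally, you can get without any functional calculus at all: for $u \in \cH$ write $w = S^{-1}u \in \cD$ so $\la S^{-1}u, u\ra = \la w, Sw\ra \geq 0$). The domain bookkeeping you flag all checks out: $g \in \cD \subset \mathrm{dom}(A)$, so $Ag \in \cH$ and $S^{-1}Ag$ makes sense; and $\la g, Ag\ra = 0$, $\la Ag, g\ra = 0$ follow from antisymmetry on the real space. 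The paper instead passes to the complexification, introduces $V = S^{-1/2}$ and $B = iVAV$, proves essential self-adjointness of $B$ (Lemma \ref{lem:esa}), and then computes $\la S_A^{-1}f, f\ra = \int (1+y^2)^{-1}\,\mu_g(\ddy)$ via the spectral theorem. Two observations on the comparison. First, your argument only uses surjectivity of $S_A$, not of $S_{-A}$; the latter half of \textbf{(G3)} is there precisely because the paper's route (proving $B$ essentially self-adjoint by showing $\Ran(B\pm i)$ dense) genuinely needs both, so your proof runs under a strictly weaker hypothesis. Second, what the paper's heavier machinery buys is the parametrized spectral formula for $\sigma_C^2(f)$, which is the engine of Section \ref{sec:extensions}: the monotonicity in $k$ and the limit $\lim_k \sigma_{kC}^2(f) = 2\mu_g(\{0\})$ in Theorem \ref{th:growing} come directly from $\int (1+k^2y^2)^{-1}\mu_g(\ddy)$, and your identity does not hand those over for free. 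That said, your closing remark is on target: your formula does immediately recover the equality characterization (equality iff $A S_A^{-1}f = 0$, equivalently $A S^{-1}f = 0$), which is the content of Corollary \ref{cor:ineqop}.
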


The proof of this result is essentially the simple computation shown in Section \ref{sec:maincomp}. The goal of our assumptions, however, is to obtain nice properties for some operators, as presented in the next section, which allow to rigorously justify that computation.

As mentioned, we want to consider real functions. However, the spectrum of our operators will lie in the complex plane, and we will thus need to use complex functions. We then consider $\HC$ the complexification of $\cH$, with inner product still written $\la \cdot, \cdot \ra$, see e.g. \cite{FrankeBSG}. Unlike in this reference though, we take the inner product to be sesquilinear on the left, so as to have the same convention as in \cite{RS1}. Clearly, we may write $\cH \subset \HC$, and all the operators $S$, $S_A$, $S^{-1}$, $S_A^{-1}$ and $V$ (defined below) send $\cH$ to $\cH$.

\subsection{Preparation}

According to the assumptions, $S^{-1}$ is a well-defined bounded operator from $\HC$ onto $\cD$, which is symmetric and positive, as $S$ is. Hence, it has a unique symmetric bounded square root $V = S^{-1/2}$. We define $\cR = V(\HC)$ to be the range of $V$. 
Notice that $V(\cR) = S^{-1}(\HC) = \cD$, so that, to summarize, we have the following operators:
\begin{itemize}
\item $S$ is a symmetric bijection from $\cD$ onto $\HC$;
\item $S^{-1}$ is a bounded self-adjoint bijection from $\HC$ onto $\cD$;
\item $V$ is a bounded self-adjoint bijection from $\HC$ onto $\cR$;
\item the restriction of $V$ to $\cR$ is a bijection onto $\cD$.
\end{itemize}
In particular, $\cD \subset \cR$, so that $\cR$ is dense in $\HC$. An easy consequence of these properties is also that, when $f \in \cR$, then $g = V S V f$ is well-defined, and
\[
V g = V V S V f = S^{-1} S V f = V f
\]
so that, since $V$ is a bijection,
\begin{equation} \label{eq:VSV}
V S V f = f, \quad f \in \cR.
\end{equation}
This is essentially saying that $V$ commutes with any operator that commutes with $S^{-1}$, so with $S$. 

Now, it makes sense to define
\[
B = i V A V
\]
with domain $\cR$. As we mentioned, $\cR$ is dense in $\HC$, so $B$ is a densely-defined unbounded operator on $\HC$. Moreover, since $V$ is symmetric and $A$ antisymmetric,
\[
B^* \supset - i (V)^* A^* (V)^* \supset - i V (- A) V = B,
\]
so that $B$ is symmetric. The main reason for \textbf{(G3)} is that it allows to prove the much stronger following result.

\begin{lemma} \label{lem:esa}
The operator $B$ is essentially self-adjoint.
\end{lemma}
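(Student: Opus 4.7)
My plan is to invoke the standard range criterion for essential self-adjointness (Theorem VIII.3 of \cite{RS1}): since $B$ is symmetric and densely defined (both facts are noted just above the lemma statement), it will be essentially self-adjoint as soon as both $\Ran(B + iI)$ and $\Ran(B - iI)$ are dense in $\HC$. A one-line computation gives $(B \pm iI)g = i(VAV \pm I)g$ on $\cR$, so the task reduces to showing that $\Ran(VAV + I)$ and $\Ran(VAV - I)$ are dense. I will in fact aim for the slightly stronger statement that each of these ranges contains $\cR$, which is already known to be dense.

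To construct a preimage of a given $f \in \cR$ under $VAV + I$, I will write $f = Vm$ for the unique $m \in \HC$ (using that $V$ is a bijection from $\HC$ onto $\cR$), then use assumption \textbf{(G3)} to pick the unique $\ell \in \cD$ with $S_A \ell = m$, and set $g = V S \ell \in \cR$. Exploiting $V^2 = S^{-1}$ on $\cD$ to simplify $Vg = \ell$, the identity one wants falls out:
\[
(VAV + I) g = V A \ell + V S \ell = V(A + S)\ell = V S_A \ell = V m = f.
\]
The analogous argument with $S_{-A}$ in place of $S_A$ will handle $VAV - I$ (take $\ell' \in \cD$ with $S_{-A} \ell' = -m$ and $g' = V S \ell'$), so both ranges contain $\cR$ and the proof will be complete.

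The main obstacle I anticipate is purely one of domain bookkeeping: one must check at each step that the intermediate vectors lie in the domains of the operators being applied, so that expressions like $A\ell$, $V^2 S \ell$ and $V S_A \ell$ actually make sense and collapse as desired. All of this follows from the tabulated properties of $S$, $S^{-1}$, and $V$ together with the standing assumption that the domain of $A$ contains $\cD$. The real content of the argument is then simply that \textbf{(G3)} provides enough solvability, via the bijectivity of $S_{\pm A}$ onto all of $\HC$, to furnish the element $\ell \in \cD$ needed in the reverse-engineered preimage $g = V S \ell$.
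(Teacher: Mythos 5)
Your proof is correct and follows the same route as the paper: both verify the range criterion by showing $\Ran(B\pm i)\supset\cR$, and both produce the preimage by using \textbf{(G3)} to solve $S_{\pm A}\ell = m$ and setting $g = VS\ell$, then collapsing via \eqref{eq:VSV}. The only (cosmetic) difference is that you factor out the $i$ and target $\Ran(VAV\pm I)$, writing $f=Vm$ directly, whereas the paper keeps the $i$ inside and writes the corresponding element as $SV(-ig)$; since $SV=V^{-1}$ on $\cR$ these are the same.
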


\begin{proof}
We just saw that $B$ is symmetric, so all we need to check is that the range $\Ran(B \pm i)$ of $B \pm i$ is dense, see the corollary to Theorem VIII.3 in \cite{RS1}. Let us first prove that $\Ran (B + i) \supset \cR$. Fix $g \in \cR$. Computing $S V (-i g)$ provides a well-defined element of $\HC$, and since $S_A$ is onto $\HC$, there exists a $h \in \cD$ such that $S_A h = S V (-i g)$. We are finally allowed to define $f = V S h$. Now we have $(B + i)f = i(V A V f + f)$, and
\begin{align*}
V A V f & = V (S_A - S) V f \\
& = V S_A V f - V S V f \\
& = V S_A V V J h - f \\
& = V S_A h - f \\
& = V S V (- ig) - f \\
& = -i g - f,
\end{align*}
where we use \eqref{eq:VSV} twice. We conclude that $(B + i) f = i(- ig - f + f) = g$.

Similarly, we can check that $\Ran (B - i) \supset \cR$, by considering $h \in \cD$ such that $S_{-A} h = S V (i g)$ instead. This is possible since $S_{-A}$ is onto $\HC$. The same computation shows that
\[
V A V f = f - ig,
\]
provided we write this time $A = S - S_{-A}$, so finally $(B-i)f = g$, and the result follows.
\end{proof}

Now, we want to give a formula for $S_A^{-1} = (S+A)^{-1}$. Informally, we may write
\[
(S+A)^{-1} = \left ( S^{1/2}(I + S^{-1/2}AS^{-1/2})S^{1/2} \right )^{-1} = S^{-1/2}(I + VAV)^{-1} S^{-1/2} = V(I -iB)^{-1}V.
\]
There are two issues in this computation: first $S^{1/2}$ is not a very pleasant object; and secondly $(I -iB)^{-1}$ is not defined. However, this second point can be dealt with as follows. Consider the closure of $B$, which we still denote $B$. It has domain $\bar{\cR}$, the closure of $\cR$ for the norm $\| \cdot \| + \| B \cdot \|$, and is self-adjoint by Lemma \ref{lem:esa}. Hence, by the spectral theorem, the resolvent $(I - i B)^{-1}$ is well-defined, and it is a bounded operator on $\HC$. For the first point, note that we actually do not need to talk about $S^{1/2}$: since $S_A$ is invertible by assumption, we just have to check that the suggested formula provides a (left- or right-) inverse for $S_A$, which we do below.

\begin{lemma} \label{lem:inverse}
We have the equality
\[
S_A^{-1} = V (I - i B)^{-1} V.
\]
\end{lemma}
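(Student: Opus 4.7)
The plan is to exploit the observation made in the excerpt: since $S_A$ is already known (by \textbf{(G3)}) to be a bijection $\cD \to \HC$, I do not need to build a genuine two-sided inverse for the candidate operator $V(I - iB)^{-1}V$. It suffices to show that this operator is a one-sided inverse of $S_A$, and automatically it will be $S_A^{-1}$. I will show left-invertibility, because this direction keeps everything in the nice subspace $\cR$ where the identity \eqref{eq:VSV} is available.

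First I would carry out a purely algebraic identification on $\cR$. For $g \in \cR$ we have $Vg \in \cD$, so $S_A V g$ makes sense and we can expand
\[
V S_A V g = V S V g + V A V g = g + V A V g,
\]
using \eqref{eq:VSV} for the first term. On the other hand $Bg = iVAVg$ is defined on $\cR$, so $(I - iB)g = g + VAVg$ as well. Thus
\[
V S_A V g = (I - iB) g, \qquad g \in \cR.
\]

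Next I would translate this into a statement on $\cD$. Given $h \in \cD$, the fact that $V$ restricts to a bijection $\cR \to \cD$ lets me write $h = V g$ with $g = V^{-1}h \in \cR$, yielding
\[
V S_A h = (I - iB)\,V^{-1} h, \qquad h \in \cD.
\]
Now I invoke Lemma \ref{lem:esa} and the spectral theorem: the closure of $B$ is self-adjoint, so $(I - iB)^{-1}$ exists as a bounded operator on all of $\HC$. Applying $(I - iB)^{-1}$ on the left and then $V$ gives
\[
V(I - iB)^{-1} V\, S_A h = V\,V^{-1} h = h,
\]
so $V(I - iB)^{-1}V$ is a left inverse of $S_A$ on $\cD$. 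Equivalently, for any $f \in \HC$ I apply this identity to $h = S_A^{-1}f \in \cD$ to conclude $V(I - iB)^{-1}V f = S_A^{-1} f$.

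The only real subtlety is the bookkeeping of domains: I must be careful that $V^{-1}h$ lives in $\cR$ (which is exactly where $B$ is defined and where \eqref{eq:VSV} applies), and that $(I - iB)^{-1}$ is available on all of $\HC$ rather than only on a dense subspace — both are guaranteed by the bijectivity statements prepared in the Preparation subsection together with Lemma \ref{lem:esa}. With those in hand, the rest of the proof is the short computation above; no separate verification of the opposite composition is needed, since $S_A$ is already known to be invertible.
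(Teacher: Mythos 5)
Your proposal is correct and takes essentially the same route as the paper: verify that $V(I-iB)^{-1}V$ is a left inverse of $S_A$ on $\cD$, using \eqref{eq:VSV} and the fact that $\bar B$ is self-adjoint so the resolvent is defined, then invoke that $S_A$ is already known to be bijective. The only cosmetic difference is that you first isolate the identity $V S_A V g = (I-iB)g$ on $\cR$ and then substitute $g = V^{-1}h$ (equivalently $g = VSh$), whereas the paper inlines this by inserting $VVS=I$ directly into the composed product; the algebra and domain bookkeeping are the same.
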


\begin{proof}
As mentioned, this is just simple algebra. Compute, for $f \in \cD$,
\begin{align*}
V (I - i B)^{-1} V S_A f & = V (I - i B)^{-1} V (S + A) V V S f \\
& = V (I - i B)^{-1} (V S V + V A V) V S f \\
& = V (I - i B)^{-1} (I - i B) V S f \\
& = V V S f \\
& = f,
\end{align*}
where all the calculations are justified by the domain assumptions and \eqref{eq:VSV}. This shows that $V (I - i B)^{-1} V$ is the left-inverse, so the inverse of $S_A$.
\end{proof}

\subsection{Main computation} \label{sec:maincomp}

Let us now end the proof of Theorem \ref{th:ineqop}. The essential point is the following computation. Informally, it only relies on \textbf{(G1)}. Assumptions \textbf{(G2)} and \textbf{(G3)} allow to obtain the results of the earlier sections, which rigorously justify this computation.

Let $f \in \cH$ (not $\HC$) and $g = Vf$. Let $\mu_g$ be the spectral measure of $B$ associated with $g$. Using Lemma \ref{lem:inverse} and the spectral theorem, we may thus compute
\begin{align*}
\la S_A^{-1} f, f \ra
& = \la V (I - i B)^{-1} V f, f \ra \\
& = \la (I - i B)^{-1} g, g \ra \\
& = \int_{\s(B)} \frac{1}{1 - i y} \mu_g(\ddy) \\
& = \int_{\s(B)} \frac{1 + iy}{1 + y^2} \mu_g(\ddy) \\
& = \int_{\s(B)} \frac{1}{1 + y^2} \mu_g(\ddy),
\end{align*}
where the last inequality is justified by the fact that we consider real quantities, or alternatively that $\mu_g$ is symmetric. Finally,
\[
\int_{\s(B)} \frac{1}{1 + y^2} \mu_g(\ddy) \leq \int_{\s(B)} 1 \: \mu_g(\ddy) = \mu_g(\s(B)) = \la g, g \ra = \la V f, V f \ra = \la S^{-1} f, f \ra.
\]
Theorem \ref{th:ineqop} follows.

\section{Two examples of diffusion processes} \label{sec:ex}

\subsection{Assumptions}

We shall now apply the results obtained in the previous section
to the generators of the diffusions \eqref{eq:diff} and \eqref{eq:diff2} on a space $M$. We will consider two specific cases: either $M = \R^d$, or $M$ is a smooth compact connected $d$-dimensional Riemannian manifold. We will make the natural following assumptions.
\begin{description}
\item[\textbf{(A1)}] $U \, : \, M \to \R$ is $C^2$ and $C \: : \: M \to M$ is $C^1$;
\item[\textbf{(A2)}] $\int_M e^{-U(x)} \dx < \infty$;
\item[\textbf{(A3)}] $\div(C e^{-U}) = 0$.
\end{description}
To simplify, in the manifold case, we will even assume
\begin{description}
\item[\textbf{(A1')}]  $U \, : \, M \to \R$ and $C \: : \: M \to M$ are smooth.
\end{description}
The first two conditions \textbf{(A1)} / \textbf{(A1')} and \textbf{(A2)} are just natural regularity and integrability assumptions. As we mentioned, \textbf{(A3)} is needed for $\pi$ to be invariant in \eqref{eq:diff2}. In the compact manifold case, there is no issue with explosion of the diffusion, or boundedness of the functions considered, and that is all we shall assume. On $\R^d$, we will need certain growth conditions on $U$ and $C$.

Let us introduce some common notations. We denote, as in the introduction, $\Lpi$ as the space of complex functions on $M$ which are square-integrable with respect to $\pi$. It is a Hilbert space endowed with the inner product
\[
\la f, g \rapi = \int_M \bar{f} g \dpi,
\]
and norm denoted by $\normpi{\cdot}$. For $k \in \N \cup \{{\infty}\}$, let $C^k_c$ be the space of $k$-times differentiable functions with compact support. For $m \geq 0$, we define the weighted Sobolev space $\cH^m(\pi)$ as the completion of $\Cinf_c$ with respect to the norm induced by the inner product
\[
\la f , g \ra_{\cH^m(\pi)} = \sum_{|a| \leq m} \la \partial_{a} f, \partial_{a} g \rapi.
\]
Alternatively, on $\R^d$, it can be defined as the subspace of functions whose $m$ first derivatives (in the distributional sense) are in $\Lpi$, by Meyers-Serrin theorem. 
Let $\un$ be the constant function equal to $1$. For any subspace $\cX$ of $\Lpi$, we set
\[
\cX_0 = \left \{ f \in \cX, \, \int f \dpi  = 0 \right \} = \cX \cap \un^{\perp}
\]
the subspace of $\cX$ of functions with zero mean. Finally, we say that an unbounded operator $T$ has a spectral gap if the real part of all the values in its spectrum are bounded away from 0 (so in particular $T^{-1}$ is well-defined and bounded).

Finally, let us define the operators
\begin{equation} \label{eq:L}
L = - \Delta + \grad U \cdot \grad
\end{equation}
and
\begin{equation} \label{eq:LC}
L_C = L - C \cdot \grad.
\end{equation}
These formulas a priori only define distributions. If the domain is well-chosen, we will see that we are actually able to consider them as operators on $\Lpi$. In any case, note that, by integration by parts, we always have that, for $f,g \in \Cinf_c$,
\begin{equation} \label{eq:Lfg}
\la L f, g \rapi = \la \grad f, \grad g \rapi = \la f, L g \rapi
\end{equation}
and
\begin{equation} \label{eq:Cdotgrad}
\la C \cdot \grad f, g \rapi = - \la f, C \cdot \grad g \rapi,
\end{equation}
where the last computation uses \textbf{(A3)}.

\subsection{Central limit theorem} \label{sec:CLT}

As should be clear from Section \ref{sec:general}, so as to apply Theorem \ref{th:ineqop}, we need precise properties of operators, the operators in question being the generators of \eqref{eq:diff} and \eqref{eq:diff2}. However, what do we mean by generator here? In general, a Markov process $(X_t)$ is associated with a $C_0$-semigroup $(P_t)$ via $P_t f(x) = \E_x(f(X_t))$, though the precise definitions of these terms and the corresponding results are far from canonical. In any case, the Hilbert structure that we wish to use, in view of Section \ref{sec:general}, is not clear here. Following Chapter 2 in \cite{Komorowski}, let us shortly explain how this is done. We shall employ here the definitions of \cite{RW1} and \cite{RW2}.

Take $E$ a complete separable metric space, and $(P_t)$ a conservative Feller-Dynkin semigroup on $C_0$, the space of continuous functions vanishing at infinity. From this, one can define a strong Markov process with transition function $(P_t)$, see Chapter III in \cite{RW1}. Let us also assume that $\pi$ is an invariant probability, i.e. that
\[
\int P_t f (x) \pidx = \int f(x) \pidx
\]
for all $f \in C_0$ and $t \geq 0$. Then
\[
\normpi{P_t f}^2 = \int (P_t f(x))^2 \pidx \leq \int P_t f^2(x) \pidx = \int f^2(x) \pidx = \normpi{f}^2,
\]
so that it is a contraction for the $\normpi{\cdot}$ norm. Since $C_0$ is dense in $\Lpi$ (say, because it is dense in the usual $\cL^2(\ddx)$), then $P_t$ can be extended to a contraction on the whole of $\Lpi$. Then, as a semigroup on $\Lpi$, $(P_t)$ is strongly continuous: for $\eps > 0$ and $f \in \Lpi$, take $g$ in $C_0$ such that $\normpi{f-g} \leq \eps$, and $t > 0$ such that $\|P_s g - g\|_{\infty} \leq \eps$ for $s \leq t$, what is possible since $(P_t)$ is strongly continuous on $C_0$. Then, for $s \leq t$,
\begin{align*}
\normpi{P_s f - f} & \leq \normpi{P_s f - P_s g} + \normpi{P_s g - g} + \normpi{g - f} \\
& \leq \normpi{f-g} + \|P_s g - g\|_{\infty} + \eps \\
& \leq 3 \eps,
\end{align*}
so that $(P_t)$ is strongly continuous on $\Lpi$. Hence, we can define its generator $G$, with domain $\cD(G)$, the set of $f \in \Lpi$ such that $(P_t f - f)/t \to g$ in $\Lpi$ for some $g \in \Lpi$. This is what we mean when we talk about generators on $\Lpi$. 

Now, assume additionally that $\pi$ is ergodic, in that the shift invariant $\s$-field is trivial. Then, there is a CLT: for any real $f \in \Lpi_0$ such that there exists a solution $h \in \cD(G)$ to the Poisson equation $-G h = f$,
\[
t^{-1/2} \int_0^t f(X_s) \ds
\]
converges under $\P_{\pi}$ to a centered Gaussian variable with variance $\s^2(f) = 2 \la f, h \rapi$. Note that we assume that $f \in \Lpi_0$, i.e. that it has mean 0, which clearly does not make any difference, but circumvents too heavy notation.

With a couple differences, this result can be found in the following references.
\begin{itemize}
\item At the beginning of Chapter 2 in \cite{Komorowski}. The authors assume that $(P_t)$ is strongly continuous on $\Lpi$, which, as we saw, is unnecessary; moreover, one readily checks from the proof that the additional domain assumptions are not needed here.
\item In Theorem 2.1 of \cite{BhattaCLT}. The spaces called $\B$ and $\B_0$ therein are the same since we already assume that $(P_t)$ is strongly continuous on $C_0$. Moreover, the result is given for $t = n \in \N$, but readily extends for continuous $t$, since $n^{-1/2} \int_n^{n+1} f(X_s) \to 0$ in $\Lpi$.
\end{itemize}
 
\subsection{On $\R^d$}

\subsubsection{Assumptions and results}

Let us first consider the case of a diffusion on $\R^d$. As mentioned, we shall make the following extra growth assumptions on $U$ and $C$:
\begin{itemize}
\item[\textbf{(A4)}] for all $\eps > 0$, there is a $c_{\eps} > 0$ such that
\[
|C \cdot \grad U| + |D^2 U| \leq \eps |\grad U|^2 + c_{\eps},
\]
where $D^2 U$ is the Hessian matrix of $U$ and $|\cdot|$ is any norm;
\item[\textbf{(A5)}] there is a constant $K$ such that
\[
|C| \leq K ( |\grad U| + 1);
\]
\item[\textbf{(A6)}] as $x \to \infty$,
\[
|\grad U(x)| \to \infty.
\]
\end{itemize}
The assumption \textbf{(A5)} tells us that the perturbation is not too large compared to the drift. In particular, it is verified in the case when $C = Q \grad U$ where $Q$ is a skew-symmetric matrix. These assumptions ensure that the generators have nice properties, which will allow to apply the results from Theorem \ref{th:ineqop}. The first result concerns the well-posedness of \eqref{eq:diff2}, and properties of its generator, which apply in particular to \eqref{eq:diff}. This is merely a compilation from results in the literature.

\begin{theorem} \label{th:onRd0}
Assume \textbf{(A1)}, \textbf{(A2)}, \textbf{(A3)} , \textbf{(A4)}, \textbf{(A5)}. Then the following hold.
\begin{enumerate}
\item Equation \eqref{eq:diff2} has a unique strong solution, and this solution is not explosive.
\item The measure $\pi$ is its unique invariant distribution.
\item The generator of \eqref{eq:diff2} on $\Lpi$ is $- L_C$, with domain $\Hpi$.
\end{enumerate}
\end{theorem}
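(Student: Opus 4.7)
The plan is to establish the three items separately, drawing from standard SDE and semigroup theory. The first two are relatively soft, while the identification of the domain as $\Hpi$ is the genuine analytic input, for which the growth assumptions \textbf{(A4)}, \textbf{(A5)}, \textbf{(A6)} are all used.

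For item (1), under \textbf{(A1)} the drift $-\grad U + C$ is $C^1$ and the diffusion coefficient is the constant identity, so classical SDE theory gives a unique strong solution up to an explosion time $\tau$. Non-explosion is obtained by a Lyapunov argument using $U$ itself: a direct computation gives
\[
-L_C U = \Delta U - |\grad U|^2 + C \cdot \grad U,
\]
and by \textbf{(A4)}, both $|\Delta U|$ and $|C \cdot \grad U|$ are bounded by $\eps |\grad U|^2 + c_\eps$. Choosing $\eps < 1/2$ yields $-L_C U \le -\tfrac12 |\grad U|^2 + C'$, which together with \textbf{(A6)} means $-L_C U \to -\infty$ at infinity. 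A standard Has'minskii-type criterion then precludes explosion. For item (2), invariance of $\pi$ is the integration-by-parts identity $\int L_C f \pidx = 0$ for $f \in \Cinf_c$, where the $L$-piece vanishes by \eqref{eq:Lfg} applied to $g = \un$ and the $C \cdot \grad$ piece vanishes by \textbf{(A3)} via \eqref{eq:Cdotgrad}. Uniqueness follows because the generator is uniformly elliptic with smooth coefficients, so transition densities are smooth and strictly positive, making the process irreducible with respect to Lebesgue measure; any invariant probability is then unique and equal to $\pi$.

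Item (3) is the main obstacle. I would proceed in three stages. First, for the reversible operator $L$ defined on $\Cinf_c$: the symmetric form $\la f, Lg \rapi = \la \grad f, \grad g \rapi$ is closable and positive, so $L$ admits a Friedrichs self-adjoint extension; the domain of this extension can be identified with $\Hpi$ via elliptic regularity in the weighted space, using \textbf{(A4)} together with integration-by-parts identities (of Bochner type) which relate $\|Lf\|_\pi^2$ to $\|D^2 f\|_\pi^2$ and $\int |\grad U|^2 |\grad f|^2 \pidx$. Second, bound $\|C \cdot \grad f\|_\pi^2 \le K^2 \int (|\grad U| + 1)^2 |\grad f|^2 \pidx$ from \textbf{(A5)} and absorb this into the previous estimate, showing that $C \cdot \grad$ is $L$-bounded with relative bound $0$; a Kato perturbation argument then gives that $-L_C$ with domain $\Hpi$ generates a $C_0$-semigroup on $\Lpi$. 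Third, identify this abstract semigroup with the transition semigroup of \eqref{eq:diff2}: for $f \in \Cinf_c$, Itô's formula shows $t^{-1}(P_t f - f) \to -L_C f$ in $\Lpi$ as $t \to 0$, so $\Cinf_c$ lies in the domain of the semigroup generator and the two operators agree on it; since $\Cinf_c$ is a core for both, the identification is complete.

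The most delicate step is the domain identification in stage one of item (3), namely the \emph{a priori} estimate controlling $\int |\grad U|^2 |\grad f|^2 \pidx$ by $\|Lf\|_\pi^2 + \|f\|_\pi^2$ on $\Cinf_c$. This is exactly where \textbf{(A4)} is needed (it lets one commute $\grad$ and $L$ modulo lower-order terms). Since the authors describe this as a compilation, I would cite established references (e.g. results on Witten Laplacians and Schrödinger operators with confining potentials, or the earlier papers on $L$ as an operator on $\Lpi$ in the Hwang--Hwang--Sheu line of work) rather than redo the functional-analytic machinery in detail.
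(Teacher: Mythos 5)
Your proposed proof of item (1) has a genuine gap: you invoke \textbf{(A6)} to conclude that $-L_C U \to -\infty$, but \textbf{(A6)} is \emph{not} among the hypotheses of Theorem~\ref{th:onRd0}---it is only assumed later in Theorem~\ref{th:onRd}. Moreover, even with \textbf{(A6)}, the Has'minskii criterion you appeal to requires $U$ to be a proper Lyapunov function, i.e.\ $U(x) \to +\infty$ as $|x| \to \infty$, so that the sublevel sets of $U$ are compact. That is neither assumed nor derived anywhere in your argument, and it does not follow automatically from \textbf{(A2)} and \textbf{(A4)} without further work. The paper avoids both issues entirely: it observes that \textbf{(A1)}--\textbf{(A5)} are exactly the hypotheses of Section~7 of Metafune--Pallara--Priola--Wannebo (\cite{MetafuneRegularity}), and cites Theorem~7.4 and Remark~7.6 there for the generation of a conservative $C_0$-semigroup with generator $-L_C$ on domain $\Hpi$, the strong Feller property, and uniqueness of the invariant measure. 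The only pieces the paper argues directly are: (a) a preparatory Lemma~\ref{lem:cont}, which under \eqref{eq:condL2} and \textbf{(A5)} shows $\phi \mapsto |\grad U|\phi$ is bounded from $\cH^1(\pi)$ to $\Lpi$, so that $L_C\colon\Hpi\to\Lpi$ makes sense; (b) conservativity of the semigroup via $T_t \un = \un$; and (c) uniqueness of the strong solution up to explosion via local Lipschitz continuity, identified with the non-explosive weak solution built from $(T_t)$.

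Your sketch of item (3)---Friedrichs extension of $L$ on $\Cinf_c$, domain identification via a Bochner-type elliptic estimate in the weighted space, Kato-type perturbation by $C \cdot \grad$, and identification with the transition semigroup by It\^{o} on $\Cinf_c$ plus a core argument---is a reasonable self-contained route and is roughly what the cited reference does internally. Two small cautions there: ``relative bound $0$'' is stronger than necessary and may not hold; relative bound strictly below $1$ suffices, and since the perturbation is antisymmetric one can also argue dissipativity of $-L_C$ directly. And in item (2), \textbf{(A1)} only gives $C^2$/$C^1$ coefficients, not smooth ones; hypoellipticity or strong Feller regularity (as in \cite{MetafuneRegularity}) is what actually yields irreducibility.
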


Our main result deals with the fact that \emph{adding an antisymmetric drift reduces the asymptotic variance}, as made precise in the following result. Once again, we assume without loss of generality that $f$ has mean 0.

\begin{theorem} \label{th:onRd}
Assume \textbf{(A1)}, \textbf{(A2)}, \textbf{(A3)}, \textbf{(A4)}, \textbf{(A5)}, \textbf{(A6)}. Then $L_C$ is onto $\Lpi_0$, and for $f \in \Lpi_0$ and $h \in \Hpi$ such that $L_C h = f$, the CLT holds:
\[
t^{-1/2} \int_0^t f(X_s) \ds
\]
converges to a centered normal variable with variance $\sigma^2_C(f)$, where
\[
\sigma^2_C(f) = 2 \la f, h \rapi.
\]
Additionally, adding an antisymmetric drift reduces the asymptotic variance, in that
\[
\sigma^2_C(f) \leq \sigma_0^2(f)
\]
for all $f \in \cL^2(\pi)_0$.
\end{theorem}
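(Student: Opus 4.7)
The plan is to invoke Theorem~\ref{th:ineqop} on the real Hilbert space $\cH = \cL^2(\pi)_0$ of real mean-zero square-integrable functions, with $S$ the restriction of $L$ and $A$ the restriction of $-C \cdot \grad$, both on the domain $\cD = \Hpi \cap \cL^2(\pi)_0$. The inequality $\la S_A^{-1} f, f \rapi \leq \la S^{-1} f, f \rapi$ delivered by that theorem, multiplied by $2$, is precisely the target $\sigma^2_C(f) \leq \sigma_0^2(f)$. The central limit theorem and the variance formula $\sigma^2_C(f) = 2 \la f, h \rapi$ then follow from the abstract discussion of Section~\ref{sec:CLT} applied to the generator $-L_C$ furnished by Theorem~\ref{th:onRd0}, once surjectivity of $L_C$ on $\cL^2(\pi)_0$ is established (in order to solve $L_C h = f$) and ergodicity of $\pi$ is noted, this following from its uniqueness in Theorem~\ref{th:onRd0}.

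Verifying \textbf{(G1)} is essentially immediate: symmetry and positivity of $S$ on $\Cinf_c$ come from~\eqref{eq:Lfg}, and antisymmetry of $A$ from~\eqref{eq:Cdotgrad} (which uses \textbf{(A3)}); both identities extend to $\cD$ by density of $\Cinf_c$ in $\Hpi$, via a standard cutoff-and-mollification argument under \textbf{(A1)} and \textbf{(A2)}.

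The bulk of the work lies in \textbf{(G2)} and \textbf{(G3)}: one must show that $L$, $L_C$ and $L_{-C}$ are all bijections from $\cD$ onto $\cL^2(\pi)_0$, with $L^{-1}$ bounded. Since $-C$ also satisfies \textbf{(A1)}, \textbf{(A3)}, \textbf{(A4)}, \textbf{(A5)}, Theorem~\ref{th:onRd0} applies to both $L_C$ and $L_{-C}$: each is minus the generator of an ergodic Markov semigroup with invariant measure $\pi$, so $0$ is a simple eigenvalue whose eigenspace reduces to the constants, and surjectivity on $\cL^2(\pi)_0$ is equivalent to a spectral gap at $0$. For the symmetric operator $L$, the growth condition \textbf{(A6)} (together with \textbf{(A4)}) yields a Poincaré inequality for $\pi$, or equivalently compactness of the resolvent of $L$, giving both the gap and the bounded inverse required by \textbf{(G2)}. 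For $L_{\pm C}$ the symmetric part of the quadratic form is still $\|\grad \cdot\|^2_\pi$, so the same coercivity persists on $\cL^2(\pi)_0$; combined with the semigroup existence of Theorem~\ref{th:onRd0} and standard semigroup theory (Lumer--Phillips / Hille--Yosida), this upgrades the dissipative estimate to the bijectivity demanded by \textbf{(G3)}.

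The main obstacle is exactly this transfer of the spectral gap from the self-adjoint generator $L$ to the non-self-adjoint $L_{\pm C}$, where the spectral theorem is not directly available. The cleanest route is to combine the dissipativity of $-L_{\pm C}$ on $\Lpi$ (a consequence of Theorem~\ref{th:onRd0}) with the identity $\mathrm{Re}\, \la L_{\pm C} f, f \rapi = \|\grad f\|^2_\pi$ on $\Cinf_c$ and the Poincaré inequality furnished by \textbf{(A6)}, to exclude $0$ from the spectrum on $\cL^2(\pi)_0$ and extract surjectivity from the Hille--Yosida range condition. Once \textbf{(G1)}--\textbf{(G3)} are in place, Theorem~\ref{th:ineqop} yields the variance inequality, and the CLT portion of the statement is a direct translation of the framework recalled in Section~\ref{sec:CLT}.
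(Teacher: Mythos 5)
Your proposal is correct and matches the paper's overall structure (reduce to Theorem~\ref{th:ineqop} with $\cH = \cL^2(\pi)_0$, $\cD = \Hpi_0$, $S = L$, $A = -C\cdot\grad$; verify \textbf{(G1)} from the integration-by-parts identities extended by density via Lemma~\ref{lem:cont}; get the CLT from Section~\ref{sec:CLT} once surjectivity and ergodicity are known), but it diverges from the paper at the crucial step of verifying \textbf{(G3)} — bijectivity of $L_{\pm C}$ on $\cL^2(\pi)_0$. The paper handles this by invoking Theorem~1 of \cite{HwangAD} to conclude that $L_C$ inherits a spectral gap from $L$, and then combines spectral gap with closedness (via \cite{YosidaFA}) to get the bijection. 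You instead propose a direct and more elementary route: the real-part coercivity $\mathrm{Re}\,\la L_{\pm C} f, f\rapi = \|\grad f\|^2_\pi \geq \lambda\normpi{f}^2$ on $\Hpi_0$ (antisymmetry of $C\cdot\grad$ makes its quadratic form purely imaginary, and the Poincar\'e inequality comes from the spectral gap of $L$ established in Lemma~\ref{lem:LonRd}), combined with the fact that $-L_{\pm C}$ already generates a $C_0$-semigroup on $\Lpi_0$ by Theorem~\ref{th:onRd0}, so that Lumer--Phillips upgrades the estimate to $\|T_t\|\leq e^{-\lambda t}$ on $\cL^2(\pi)_0$ and Hille--Yosida then places $0$ in the resolvent set. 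This avoids citing \cite{HwangAD} entirely and is self-contained modulo standard semigroup theory; what it gives up is the quantitative gap comparison that \cite{HwangAD} provides, which is not needed here. Two minor inaccuracies worth noting: the Poincar\'e inequality and compactness of the resolvent of $L$ are not ``equivalent'' (the latter is strictly stronger and is what the Schr\"odinger-operator argument in Lemma~\ref{lem:LonRd} actually yields), but only the Poincar\'e inequality enters your argument, so no harm is done; and you derive ergodicity from uniqueness of the invariant measure rather than, as the paper does, from simplicity of the eigenvalue $0$ of $L_C$ via Proposition~2.2 of \cite{BhattaCLT} — both are valid.
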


\subsubsection{Proof of Theorem \ref{th:onRd0}}

To begin with, let us check that $L_C$, as defined by Equation \eqref{eq:LC} on $\Hpi$, does define an element of $\Lpi$. For $f \in \Hpi$, we have $\Delta f \in \Lpi$, but we would also like $(\grad U + C) \cdot \grad f \in \Lpi$, and it is not clear what this last condition means, or how it depends on $C$. It turns out that this condition is redundant, provided that \textbf{(A5)} holds, along with a weak form of \textbf{(A4)}, namely that
\begin{equation} \label{eq:condL2}
|\Delta U| \leq a |\grad U|^2 + b
\end{equation}
for some $a < 1$ and $b \geq 0$. This is a modification of Lemma 2.1 in \cite{Lunardi}, which can be found in Lemma 7.1 in \cite{MetafuneRegularity}. We give a proof since this is a central point and that it is short in this $\Lpi$ setting.

\begin{lemma} \label{lem:cont}
Assume \eqref{eq:condL2} for some $a < 1$ and $b \geq 0$. Then the map $\phi \mapsto |\grad U| \phi$ is bounded from $\cH^1(\pi)$ to $\Lpi$. In particular, $L$ is bounded from $\Hpi$ to $\Lpi$, and so are $C \cdot \grad$ and $L_C$ if \textbf{(A5)} holds.
\end{lemma}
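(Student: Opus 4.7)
The plan is to establish the main estimate $\bigl\| |\grad U| \phi \bigr\|_\pi \leq K \|\phi\|_{\cH^1(\pi)}$ for all $\phi \in \Cinf_c$ and then extend by density. The two ``in particular'' statements are then immediate by applying this componentwise to the partial derivatives of a function in $\Hpi$.

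The key identity comes from integration by parts, exploiting the fact that $\grad U \cdot e^{-U} = - \grad(e^{-U})$. For $\phi \in \Cinf_c$, I would write
\[
\int |\grad U|^2 \phi^2 \pidx = - \int \grad U \cdot \grad(e^{-U}) \phi^2 \ddx = \int \div(\phi^2 \grad U) e^{-U} \ddx = \int \phi^2 \, \Delta U \pidx + 2 \int \phi \, \grad U \cdot \grad \phi \pidx,
\]
which is legitimate since $\phi$ has compact support. Applying the hypothesis $|\Delta U| \leq a |\grad U|^2 + b$ to the first term, and Young's inequality $2|\phi \grad U \cdot \grad \phi| \leq \eta \phi^2 |\grad U|^2 + \eta^{-1} |\grad \phi|^2$ to the second, gives
\[
\int |\grad U|^2 \phi^2 \pidx \leq (a + \eta) \int |\grad U|^2 \phi^2 \pidx + b \|\phi\|_\pi^2 + \eta^{-1} \|\grad \phi\|_\pi^2.
\]
Since $a < 1$, I can choose $\eta > 0$ small enough so that $a + \eta < 1$ and absorb the first term on the right into the left-hand side, obtaining the desired bound with constant $K = K(a,b,\eta)$. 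Extending to general $\phi \in \cH^1(\pi)$ is then routine by the density of $\Cinf_c$ in $\cH^1(\pi)$ together with Fatou's lemma on the left.

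For the consequences: if $\phi \in \Hpi$, then each $\partial_i \phi \in \cH^1(\pi)$, so the main estimate yields $|\grad U| \, |\grad \phi| \in \Lpi$ with norm bounded by $\|\phi\|_{\Hpi}$; combined with $\Delta \phi \in \Lpi$, this shows $L \phi \in \Lpi$ with $\|L\phi\|_\pi \lesssim \|\phi\|_{\Hpi}$. Under \textbf{(A5)}, $|C \cdot \grad \phi| \leq K(|\grad U| + 1)|\grad \phi|$, so $C \cdot \grad$ and therefore $L_C = L - C \cdot \grad$ are also bounded from $\Hpi$ to $\Lpi$.

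The only step needing care is the integration by parts and the density extension, but since $\Cinf_c$ is the defining dense subspace of $\cH^1(\pi)$ and the estimate is quadratic and closed under $L^2$-limits, there is no real obstacle; the argument is a quantitative version of the classical fact that Gaussian-weighted derivatives of $U$ are controlled by lower-order Sobolev norms.
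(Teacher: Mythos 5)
Your argument is correct and follows the same route as the paper: the same integration-by-parts identity, the same use of Young's inequality with a small parameter to absorb the $(a+\eta)\|\phi|\grad U|\|_\pi^2$ term, and the same density extension. The only difference is that you spell out the passage to the ``in particular'' consequences (componentwise application to the $\partial_i\phi$) a bit more explicitly than the paper, which is a harmless elaboration.
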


\begin{proof}
Since $\Cinf_c$ is dense is $\Hpi$, it suffices to prove the result for $\phi \in \Cinf_c$. Fix $\eps > 0$ such that $a + \eps  < 1$. By integration by parts, Cauchy-Schwartz inequality, and $|xy| \leq (x^2/\eps + \eps y^2)/2$, we have
\begin{align*}
\normpi{|\grad U| \phi}^2 & = - \int \phi^2 \grad U \cdot \grad e^{- U} \dx \\
& = \int \div \left ( \phi^2 \grad U \right ) e^{-U} \dx \\
& = \int 2 \phi \grad \phi \cdot \grad U \dpi + \int \Delta U \phi^2 \dpi \\
& \leq 2 \int \phi |\grad \phi| |\grad U| \dpi + \int (a |\grad U|^2 + b) \phi^2 \dpi \\
& \leq \frac{1}{\eps} \normpi{\grad \phi}^2 + (a + \eps) \normpi{\phi |\grad U|}^2 + b  \normpi{\phi}^2,
\end{align*}
whence the first part of the result follows after rearranging. The statement for $L$ follows readily, and the result for $L_C$ follows by assumption \textbf{(A5)}.
\end{proof}

Speaking about $L$ and $L_C$ as operators on $\Lpi$ with domain $\Hpi$ thus makes sense. To obtain more properties of these operators, let us first mention that our assumptions \textbf{(A1)}, \dots, \textbf{(A5)} are exactly those of Section 7 in \cite{MetafuneRegularity}. Theorem 7.4 and Remark 7.6 therein tell that
\begin{enumerate}
\item $- L_C$ is the generator of a $C_0$-semigroup $(T_t)$;
\item $(T_t)$ is strongly Feller;
\item $\pi$ is its unique invariant measure.
\end{enumerate}
In particular, since $\un \in \Hpi$, we have $\dfdt T_t \un = T_t L_C \un = 0$, so that $T_t \un = T_0 \un = \un$ in $\Lpi$. In particular, as a $C_0$-semigroup, $(T_t)$ is conservative. By standard constructions, see Chapter III in \cite{RW1} and V in \cite{RW2}, this provides a (non-explosive) Markov process with generator $- L_C$, which is itself a solution to \eqref{eq:diff2}.

To wrap up the proof of Theorem \ref{th:onRd0}, note, to begin with, that the existence and uniqueness of a strong solution to \eqref{eq:diff2}, maybe up to some explosion time, is clearly just the local Lipschitz continuity of the coefficients, see Theorem 3.1 in \cite{IW}. By uniqueness, this solution has the same properties as the (weak) one that we just constructed, namely that there is no explosion and that its transition semigroup is $(T_t)$. In particular, $\pi$ is its unique invariant measure, and the generator of $(T_t)$ on $\Lpi$ is $- L_C$.

\subsubsection{Proof of Theorem \ref{th:onRd}} \label{sec:proofthonRd}

Now, to prove Theorem \ref{th:onRd}, we will obviously use Theorem \ref{th:ineqop}, and to this end, we need spectral properties of the generator. Recall \eqref{eq:Lfg} and \eqref{eq:Cdotgrad} and note that, since $\Cinf_c$ is dense in $\Hpi$, and that $L$ and $C \cdot \grad$ are bounded from $\Hpi$ to $\Lpi$ by Lemma \ref{lem:cont}, then these equalities extends to $f,g \in \Hpi$, i.e.
\begin{equation} \label{eq:symRd}
\la L f, g \rapi = \la \grad f, \grad g \rapi = \la f, L g \rapi, \quad \la C \cdot \grad f, g \rapi = - \la f, C \cdot \grad g \rapi, \quad f, g \in \Hpi.
\end{equation}
Hence, $L$ is symmetric and $C \cdot \grad$ is antisymmetric on $\Hpi$. Let us give more properties of $L$ in the following result.

\begin{lemma} \label{lem:LonRd}
Assume \textbf{(A1)}, \textbf{(A2)}, \textbf{(A3)}, \textbf{(A4)}, \textbf{(A5)}, \textbf{(A6)}. Then $L$ is self-adjoint and positive, has a purely discrete spectrum consisting of eigenvalues and a complete set of eigenfunctions.
\end{lemma}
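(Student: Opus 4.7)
The plan is to establish, in order, symmetry, positivity, self-adjointness, and finally the purely discrete nature of the spectrum via a compact resolvent argument.

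Symmetry and positivity are immediate from \eqref{eq:symRd}: for $f,g \in \Hpi$ we have $\la Lf, g \rapi = \la \grad f, \grad g \rapi = \la f, Lg \rapi$, and $\la Lf,f \rapi = \normpi{\grad f}^2 \geq 0$. For self-adjointness, I would invoke Theorem \ref{th:onRd0} specialized to $C = 0$, which says $-L$ with domain $\Hpi$ generates a $C_0$-semigroup on $\Lpi$. Standard Hille–Yosida theory then gives that $\Ran(L + \lambda I) = \Lpi$ for all $\lambda > 0$ sufficiently large. A symmetric positive operator whose range contains $\Ran(L + \lambda I) = \Lpi$ for some $\lambda > 0$ has deficiency indices zero and is therefore self-adjoint; this finishes the first half.

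For the spectral statement, my plan is to show that the resolvent $(L+I)^{-1}$ is a compact operator on $\Lpi$, whence the spectral theorem for compact self-adjoint operators gives a complete orthonormal basis of eigenfunctions and a purely discrete spectrum accumulating only at $+\infty$. The natural route factors $(L+I)^{-1}$ through $\cH^1(\pi)$: the identity $\la (L+I)f, f\rapi = \normpi{\grad f}^2 + \normpi{f}^2 = \|f\|_{\cH^1(\pi)}^2$ shows that $(L+I)^{-1}$ is bounded from $\Lpi$ into $\cH^1(\pi)$. It therefore suffices to prove that the embedding $\cH^1(\pi) \hookrightarrow \Lpi$ is compact.

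The compact embedding is the main obstacle and is where assumption \textbf{(A6)} enters. Given a bounded sequence $(f_n)$ in $\cH^1(\pi)$, I would proceed in two steps. On any ball $B_R$, the density $e^{-U}$ of $\pi$ is bounded above and below by positive constants, so $(f_n)$ is bounded in the standard $H^1(B_R)$; the Rellich–Kondrachov theorem then extracts a subsequence converging in $L^2(B_R)$, hence in $L^2(B_R, \pi)$. A diagonal extraction over $R \to \infty$ yields a subsequence $(f_{n_k})$ converging in $L^2(B_R, \pi)$ for every $R$. To control the tail, I would reuse the computation in the proof of Lemma \ref{lem:cont}: under \textbf{(A4)} (which implies the hypothesis \eqref{eq:condL2} of that lemma) one obtains
\[
\normpi{|\grad U|\, f}^2 \leq C \, \|f\|_{\cH^1(\pi)}^2
\]
for all $f \in \cH^1(\pi)$, with $C$ independent of $f$. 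By \textbf{(A6)}, $M_R := \inf_{|x| \geq R} |\grad U(x)|^2 \to \infty$ as $R \to \infty$, so
\[
\int_{|x| \geq R} f_n^2 \dpi \leq \frac{1}{M_R} \normpi{|\grad U|\, f_n}^2 \leq \frac{C}{M_R}\|f_n\|_{\cH^1(\pi)}^2,
\]
which is uniformly small in $n$. Combining uniform smallness of the tails with $L^2(B_R,\pi)$-convergence on each ball shows that $(f_{n_k})$ is Cauchy in $\Lpi$, establishing compactness of the embedding.

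With the compact embedding in hand, $(L+I)^{-1}$ is compact and self-adjoint on $\Lpi$, so the spectral theorem for compact self-adjoint operators yields a complete orthonormal basis of eigenfunctions; translating back, $L$ has a complete set of eigenfunctions with eigenvalues tending to $+\infty$, as claimed.
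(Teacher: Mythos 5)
Your proof is correct, but it takes a genuinely different route than the paper's. The paper first conjugates $L$ by the ground-state transform $f \mapsto e^{-U/2}f$ to the Schr\"odinger operator $S = -\Delta + V$ with $V = \tfrac14|\grad U|^2 - \tfrac12\Delta U$ on the \emph{unweighted} $\cL^2(\ddx)$, identifies matching cores on both sides, observes that \textbf{(A4)} and \textbf{(A6)} force $V(x)\to\infty$, and then quotes Theorem XIII.67 of Reed--Simon for discreteness of the spectrum and completeness of the eigenfunctions (with self-adjointness coming from the form-sum construction of $S$). You instead stay in the weighted space throughout: you derive self-adjointness from the fact that $-L$ generates a $C_0$-semigroup (Theorem~\ref{th:onRd0} with $C=0$), which gives $\Ran(L+\lambda I)=\Lpi$ and hence vanishing deficiency indices for a symmetric positive operator; and you prove compactness of $(L+I)^{-1}$ directly by showing the embedding $\cH^1(\pi)\hookrightarrow\Lpi$ is compact, combining Rellich--Kondrachov on balls (where $e^{-U}$ is comparable to Lebesgue measure) with the tail bound $\normpi{|\grad U|f}\lesssim\|f\|_{\cH^1(\pi)}$ from Lemma~\ref{lem:cont} and $|\grad U|\to\infty$. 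Both arguments exploit \textbf{(A4)} and \textbf{(A6)} for the same underlying ``confinement'' reason, and both lean on an external result (Reed--Simon vs.\ the generation result of Theorem~\ref{th:onRd0}). The paper's route is shorter and gives the Schr\"odinger picture, which can be useful elsewhere; yours avoids the conjugation and the core-matching bookkeeping, is self-contained at the level of Sobolev embeddings, and makes the role of the confining weight $|\grad U|$ in the tail estimate fully explicit.
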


\begin{proof}
It is well-known and easy to check that $L$ is unitarily equivalent to the Schr\"{o}dinger operator $S = -\Delta + V$, where
\[
V = \frac14 |\grad V|^2 - \frac12 \Delta U 
\]
on $\cL^2(\ddx)$, via the transformation $T \, : \, f \mapsto e^{-U/2} f$. To be precise, consider $L$ and $S$ with domain $C^2_c$. Then $T$ is a unitary transformation from $\Lpi$ onto $\cL^2(\ddx)$, $T(C^2_c) = C^2_c$ and $L = T^{-1} S T$ on $C^2_c$. Hence, (the closure of) $L$ and $S$ have the same spectral properties, see e.g. Section 73 in \cite{AkGlaz}.

Now, as mentioned in the proof of Theorem 7.4 in \cite{MetafuneRegularity}, $\Cinf_c$ is a core for $(L,\Hpi)$. Since $\Cinf_c \subset C^2_c \subset \Hpi$, then $C^2_c$ is also a core, so the closure of $(L,C^2_c)$ is our operator $(L,\Hpi)$.

On the other end, the closure $(S,\cD)$ of $(S,C^2_c)$ is clearly $S$ ``defined as a sum of quadratic forms'', in the terminology of \cite{RS2}, see Example 4 p. 181 therein. But \textbf{(A5)} and \textbf{(A6)} imply\footnote{Equivalently, we could have also directly assumed that $V(x) \to \infty$, but we would rather keep the most simple assumptions.} that $V(x) \to \infty$ as $x \to \infty$. Theorem XIII.67 in \cite{RS4} thus ensures that $(S,\cD)$ has a purely discrete spectrum and a complete set of eigenfunctions. By unitary equivalence, so does $L$.
\end{proof}

Without loss of generality, we will assume that our functions have mean zero. This is possible, since by symmetry, we have that $\la L f, \un \rapi = \la f, L \un \rapi = \la f , 0 \rapi = 0$ for all $f \in \Hpi$, and similarly for $C \cdot \grad$ and thus $L_C$. We may thus consider our operators as operators on $\Lpi_0$ with domain $\Hpi_0$. We summarize their properties in the following result.

\begin{lemma} \label{lem:oponRd}
As operators on $\Lpi_0$ with domain $\Hpi_0$,
\begin{itemize}
\item $L$ is self-adjoint, positive, and has a spectral gap;
\item $C \cdot \grad$ is antisymmetric;
\item $L_C$ is closed and has a spectral gap.
\end{itemize}
\end{lemma}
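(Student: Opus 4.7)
The plan is to dispatch the three items in turn, with the bulk of the work going into the spectral gap of $L_C$. For $L$ on $\Hpi_0$, I will invoke Lemma \ref{lem:LonRd} directly: it already yields self-adjointness, positivity and a purely discrete spectrum of $L$ on $\Hpi$. The identity $\la Lf, f\rapi = \|\grad f\|^2_\pi$ from \eqref{eq:symRd} gives $\ker L \cap \Hpi = \R \un$, so $\Lpi_0 = \un^\perp$ is a closed $L$-invariant subspace on which $L$ retains self-adjointness and positivity; discreteness of the spectrum forces $0$ to be isolated, and the smallest remaining eigenvalue $\lambda > 0$ is the spectral gap. The antisymmetry of $C\cdot\grad$ on $\Hpi_0$ is then immediate from the second identity of \eqref{eq:symRd}, restricted to $\Hpi_0 \subset \Hpi$.

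For $L_C$, closedness is inherited from Theorem \ref{th:onRd0}(3): the generator of a $C_0$-semigroup is closed, and the restriction to the closed invariant subspace $\Lpi_0$ with domain $\Hpi_0 = \Hpi \cap \Lpi_0$ preserves this. Here $\Lpi_0$ is $(T_t)$-invariant since $\int T_t f \dpi = \int f \dpi$ by $\pi$-invariance, and $\Hpi = \cD(L_C)$ is $(T_t)$-invariant by general semigroup theory. For the spectral gap, I will run the standard $\cL^2$-energy argument. For $f \in \Hpi_0$, the antisymmetry of $C\cdot\grad$ forces $\Re\la C\cdot\grad T_t f, T_t f\rapi = 0$ while self-adjointness of $L$ gives $\la L T_t f, T_t f\rapi = \|\grad T_t f\|^2_\pi$, so
\[
\frac{d}{dt}\|T_t f\|^2_\pi = -2\Re\la L_C T_t f, T_t f\rapi = -2\|\grad T_t f\|^2_\pi \leq -2\lambda \|T_t f\|^2_\pi,
\]
where the final step is the Poincar\'e inequality on $\Hpi_0$ coming from the spectral gap of $L$. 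Gr\"onwall then yields $\|T_t f\|_\pi \leq e^{-\lambda t}\|f\|_\pi$, extended to all of $\Lpi_0$ by density. The Laplace-transform formula $(z + L_C)^{-1}g = \int_0^\infty e^{-zt} T_t g\dt$ converges and defines a bounded operator whenever $\Re z > -\lambda$, whence $\s(L_C|_{\Lpi_0}) \subset \{\Re z \geq \lambda\}$, giving the spectral gap.

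The delicate step is the complex bookkeeping: to justify $\Re\la L_C g, g\rapi = \|\grad g\|^2_\pi$ for complex $g \in \Hpi_0$ one must check that the sesquilinear extensions of \eqref{eq:symRd} to $\HC$ behave as expected, with the antisymmetric part purely imaginary on the diagonal. Once this convention is pinned down, the rest of the argument is routine: the Poincar\'e inequality from $L$ plus elementary semigroup theory handle everything else.
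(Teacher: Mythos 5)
Your proof is correct, and the treatment of the first two items (self-adjointness, positivity, and spectral gap of $L$ on $\Hpi_0$ via Lemma \ref{lem:LonRd} together with the kernel identification from $\la Lf,f\rapi = \|\grad f\|^2_\pi$; antisymmetry of $C\cdot\grad$ from \eqref{eq:symRd}) matches the paper's. For the third item you diverge on the spectral gap: the paper simply invokes Theorem 1 of \cite{HwangAD}, which asserts that whenever $L$ has a spectral gap so does $L_C$ (and in fact a larger one), whereas you give a self-contained dissipativity argument. Your computation $\frac{d}{dt}\|T_tf\|^2_\pi = -2\Re\la L_C T_t f, T_t f\rapi = -2\|\grad T_tf\|^2_\pi \le -2\lambda\|T_tf\|^2_\pi$ (using antisymmetry to kill the $C\cdot\grad$ contribution, the identity $\la Lg,g\rapi=\|\grad g\|^2_\pi$, the Poincar\'e inequality on $\Hpi_0$, and the invariance of mean-zero under $T_t$) is sound, and the passage from the resulting decay $\|T_tf\|_\pi\le e^{-\lambda t}\|f\|_\pi$ to $\s(L_C|_{\Lpi_0})\subset\{\Re z\ge\lambda\}$ via the Laplace-transform representation of the resolvent is the standard Hille--Yosida bound. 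What this buys you is independence from the external reference and an explicit quantitative lower bound $\lambda$ on the gap of $L_C$; what it costs you is that you do not recover the \emph{strict} improvement of the gap that \cite{HwangAD} gives (irrelevant for this lemma, but used nowhere here anyway). The closedness claim is handled identically in both: generators of $C_0$-semigroups are closed, and the restriction to the closed $(T_t)$-invariant subspace $\Lpi_0$ with domain $\Hpi_0$ preserves this.
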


\begin{proof}
For the first point, since $\la L f, f \rapi = \| \grad f \|^2_{\pi}$ for $f \in \Hpi$, then the kernel of $L$ consists of constant functions. As an operator on $\Hpi$, 0 is thus a simple eigenvalue of $L$. From Lemma \ref{lem:LonRd}, as an operator on $\Hpi_0$, $L$ has thus a spectral gap. Now, the second point of the result is just \eqref{eq:symRd}. As for the third point, it is the main result, Theorem 1, of \cite{HwangAD}, that we mentioned in the introduction: when $L$ has a spectral gap, then so does $L_C$ (and the gap is even larger). The closedness is just because it is the generator of a strongly continuous semigroup on its natural domain.
\end{proof}

To conclude, note that the spectral gap property implies that $L$ and $L_{\pm C}$ are bijections from $\Hpi_0$ onto a dense subset of $\Lpi_0$ with bounded inverse. But since they are closed, they even are onto $\Lpi_0$, see p. 209 in \cite{YosidaFA}. Hence, \textbf{(G1)}, \textbf{(G2)} and \textbf{(G3)} hold with $\cH$ the space of real functions of $\Lpi_0$, $\cD$ the space of real functions of $\Hpi_0$, $S = L$ and $A = - C \cdot \grad$, so that Theorem \ref{th:ineqop} can be applied. For the CLT, we just need to check that $\pi$ is ergodic, which is equivalent to the fact just proved that 0 is a simple eigenvalue of $L_C$ on $\Lpi$, see Proposition 2.2 in \cite{BhattaCLT}.

\subsection{On a compact manifold}

\subsubsection{Assumptions and results}

In this section, we consider the diffusions \eqref{eq:diff} and \eqref{eq:diff2} on $M$, a smooth d-dimensional connected compact Riemannian manifold. For the precise meaning of these SDEs, we refer to Chapter 5 in \cite{IW}. We will for simplicity assume that $U$ and $C$ are smooth. Let us define the space
\[
\Wpi = \left \{ f \in \cH^1(\pi), \, Lf \in \Lpi \right \}.
\]
Since $C$ is bounded, we do not need a result like Lemma \ref{lem:cont}, and our operators $L$, $C \cdot \grad$ and $L_C$ make sense as unbounded operators on $\Lpi$, with domain $\Wpi$. We will once again give two results, the first concerning the behavior of the diffusions, the second one being the main result about reducing asymptotic variance.

\begin{theorem} \label{th:onmanifold0}
Assume \textbf{(A1')}, \textbf{(A2)}, \textbf{(A3)}. Then the following hold.
\begin{enumerate}
\item Equation \eqref{eq:diff2} has a unique strong solution.
\item The measure $\pi$ is its unique invariant distribution.
\item The generator of \eqref{eq:diff} on $\Lpi$ is given by $- L$, with domain $\Wpi$.
\item The generator of \eqref{eq:diff2} on $\Lpi$ has a domain containing $\Wpi$, and is equal to $- L_C$ on $\Wpi$. 
\end{enumerate}
\end{theorem}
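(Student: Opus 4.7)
The plan is to establish the four assertions in sequence, using standard SDE theory on compact manifolds for (1), integration by parts plus ellipticity for (2), It\^o's formula to identify the generator on $\Cinf(M)$ for (3) and (4), and elliptic regularity to pin down $\Wpi$ as the relevant domain.

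For (1), smoothness of $U$ and $C$ from \textbf{(A1')} renders the coefficients of \eqref{eq:diff2} locally Lipschitz in any chart, so Theorem 1.1 of Chapter V in \cite{IW} yields a unique strong solution, and compactness of $M$ rules out explosion automatically. For (2), invariance of $\pi$ is a direct consequence of \textbf{(A3)}: for $f \in \Cinf(M)$, integration by parts on the boundaryless compact manifold $M$ combined with \eqref{eq:Lfg} and \eqref{eq:Cdotgrad} gives $\int L_C f \dpi = 0$, whence $\frac{d}{dt} \int P_t f \dpi = 0$. For uniqueness of the invariant probability, note that $-L_C$ is a uniformly elliptic second-order operator with smooth coefficients on the compact $M$, so parabolic regularity produces a smooth strictly positive transition density, making the induced Markov process strong Feller and irreducible; thus $\pi$ is the only invariant probability.

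For (3) and (4), let $G$ denote the generator of the semigroup $(P_t)$ on $\Lpi$. It\^o's formula applied to $f \in \Cinf(M)$ gives
\[
P_t f - f = -\int_0^t P_s L_C f \, ds,
\]
so $\Cinf(M) \subset \cD(G)$ with $G f = -L_C f$ there. To prove the inclusion $\Wpi \subset \cD(G)$ and the equality $Gf = -L_C f$ on $\Wpi$ claimed in (4), take $f \in \Wpi$: since $C$ is bounded on $M$, one has $C \cdot \grad f \in \Lpi$ and hence $L_C f \in \Lpi$; a Friedrichs mollification in charts produces $f_n \in \Cinf(M)$ with $f_n \to f$ in $\cH^1(\pi)$ and $L f_n \to L f$ in $\Lpi$, and closedness of $(G,\cD(G))$ upgrades this to $f \in \cD(G)$ with $Gf = -L_C f$.

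For the equality $\cD(G) = \Wpi$ in (3), the extra ingredient is elliptic regularity. When $C = 0$, the operator $L$ is symmetric and positive on $\Cinf(M)$ by \eqref{eq:Lfg}; its Friedrichs extension $\tilde L$ is the unique self-adjoint extension with form domain $\cH^1(\pi)$, and the semigroup $(e^{-t\tilde L})$ agrees with $(P_t)$ on the dense set $\Cinf(M)$, hence on all of $\Lpi$, so $\cD(G) = \cD(\tilde L)$. Standard elliptic regularity on the compact Riemannian manifold then identifies $\cD(\tilde L)$ with $\{f \in \cH^1(\pi) : L f \in \Lpi\} = \Wpi$. The main technical obstacle is precisely this last step: one must invoke weighted elliptic regularity to conclude that an $\cH^1$-function with a distributional $L f$ in $\Lpi$ already lies in the natural second-order Sobolev space, a step that crucially uses compactness of $M$ to bypass issues at infinity. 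The non-reversible assertion (4) is weaker, merely an inclusion, and so does not need this regularity input.
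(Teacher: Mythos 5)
Your proof follows the same architecture as the paper's: part (1) via the local Lipschitz theory of \cite{IW} plus compactness, part (2) by verifying invariance through integration by parts and uniqueness through ergodicity, and parts (3)--(4) by identifying the $\Lpi$ generator with $-L$ (resp.\ $-L_C$) on $\Cinf(M)$ and then extending by density and closedness. Where you diverge is in how the sub-claims are justified. For uniqueness of $\pi$ you argue strong Feller plus irreducibility via a positive smooth transition density; the paper instead cites Prop.\ 4.5, Chapter 5 of \cite{IW} --- both are fine. For pinning down $\cD(G)=\Wpi$, the paper outsources everything to Lemma \ref{lem:Lonmanif}, a citation of \cite{GrigoryanBook} giving self-adjointness of $(L,\Wpi)$ and identification with the heat semigroup, whereas you rebuild it via the Friedrichs extension. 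Two small corrections to that last step: the identification $\cD(\tilde L)=\Wpi$ is not an elliptic regularity statement --- by construction, the domain of the Friedrichs extension of the form $\la\grad\cdot,\grad\cdot\rapi$ on $\cH^1(\pi)$ is exactly $\{f\in\cH^1(\pi):Lf\in\Lpi\}=\Wpi$ (using density of $\Cinf$ in $\cH^1(\pi)$ to pass from test functions to form-domain elements), so no weighted regularity theory is needed; elliptic regularity would only enter if you wanted to show $\Wpi=\Hpi$, which the statement never asks for. Also, the claim that $(e^{-t\tilde L})$ ``agrees with $(P_t)$ on $\Cinf$, hence on $\Lpi$'' quietly needs essential self-adjointness of $L|_{\Cinf}$ (or the observation that $G$, being self-adjoint and nonnegative, and agreeing with the Friedrichs extension on a common core, must equal it); density alone is not enough. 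The paper is equally terse here, delegating the point to the cited reference, so this is a presentational gap rather than a substantive one. Your mollification/closedness argument for part (4) matches the paper's.
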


Note that, unlike Theorem \ref{th:onRd0}, we cannot precisely identify the domain of the perturbed diffusion \eqref{eq:diff2}, but only say that it contains $\Wpi$, which is more than sufficient for all intents and purposes. Our main result is the following.

\begin{theorem} \label{th:onmanifold}
Assume \textbf{(A1')}, \textbf{(A2)}, \textbf{(A3)}. Then $L_C$ is onto $\Lpi_0$, and for $f \in \Lpi_0$ and $h \in \Hpi$ such that $L_C h = f$, the CLT holds:
\[
t^{-1/2} \int_0^t f(X_s) \ds
\]
converges to a centered normal variable with variance $\sigma^2_C(f)$, where
\[
\sigma^2_C(f) = 2 \la f, h \rapi.
\]
Additionally, adding an antisymmetric drift reduces the asymptotic variance, in that
\[
\sigma^2_C(f) \leq \sigma_0^2(f)
\]
for all $f \in \cL^2(\pi)_0$.
\end{theorem}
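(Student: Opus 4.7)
The plan is to carry over the proof of Theorem \ref{th:onRd} to the manifold setting, which is in fact simpler because compactness of $M$ together with smoothness of $U$ and $C$ eliminates all the growth and integrability issues that made assumptions \textbf{(A4)}, \textbf{(A5)}, \textbf{(A6)} necessary on $\R^d$. The strategy is to verify the hypotheses \textbf{(G1)}, \textbf{(G2)} and \textbf{(G3)} of Theorem \ref{th:ineqop} for the real Hilbert space $\cH = \Lpi_0$, the dense domain $\cD = \Wpi_0$, the symmetric operator $S = L$ and the antisymmetric perturbation $A = -C \cdot \grad$, and then to translate the resulting inequality $\la L_C^{-1} f, f \rapi \leq \la L^{-1} f, f \rapi$ into the stated bound on the asymptotic variances via the CLT framework of Section \ref{sec:CLT}.

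For \textbf{(G1)}, I would start from the identities \eqref{eq:Lfg} and \eqref{eq:Cdotgrad}, which hold for $f,g \in \Cinf$, and extend them to $f,g \in \Wpi$. Since $M$ is compact and $C$ is smooth (hence bounded), $L$ and $C \cdot \grad$ are bounded from $\Wpi$ (with its graph norm) into $\Lpi$, and $\Cinf$ is dense in $\Wpi$ by a standard mollification argument on a compact manifold; the two identities therefore extend by continuity, giving symmetry and positivity of $L$, and antisymmetry of $C \cdot \grad$, on $\Wpi$.

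For \textbf{(G2)}, I would exploit compactness directly: by standard elliptic theory on compact Riemannian manifolds, the weighted Laplacian $L$ is essentially self-adjoint on $\Cinf$, has compact resolvent (Rellich--Kondrachov plus interior elliptic regularity), and hence a purely discrete spectrum of eigenvalues accumulating only at $+\infty$. Connectedness of $M$ forces $\ker L$ to consist of constants, so $0$ is a simple eigenvalue and $L$ has a spectral gap on $\Lpi_0$; together with self-adjointness this makes $L$ a bijection from $\Wpi_0$ onto $\Lpi_0$ with bounded inverse. For \textbf{(G3)}, closedness of $L_{\pm C}$ is immediate since, by Theorem \ref{th:onmanifold0}(4), $L_{\pm C}$ coincides on $\Wpi$ with the generator of a strongly continuous semigroup. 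What remains is the spectral gap of $L_{\pm C}$, for which one invokes the analog of Theorem~1 in \cite{HwangAD}: as soon as $L$ has a spectral gap and $C \cdot \grad$ is antisymmetric and preserves $\pi$, $L_C$ inherits (and actually strengthens) that gap. The argument in \cite{HwangAD} uses only the spectral representation of $L$ and elementary resolvent manipulations, so it transfers to the manifold setting with no change.

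This last step---promoting the spectral gap of $L$ to one for $L_C$---is the main substantive point; everything else is bookkeeping. Once it is in hand, Theorem \ref{th:ineqop} delivers the operator inequality, hence $\s_C^2(f) \leq \s_0^2(f)$. To conclude the CLT part, I would appeal to Section \ref{sec:CLT}: $(P^C_t)$ is a conservative Feller--Dynkin semigroup on $C(M)$ with invariant probability $\pi$ (immediate from Theorem \ref{th:onmanifold0}), and $\pi$ is ergodic because $0$ is a simple eigenvalue of $L_C$ on $\Lpi$---another consequence of the spectral gap, via \cite[Proposition~2.2]{BhattaCLT}. Finally, surjectivity of $L_C$ from $\Wpi_0$ onto $\Lpi_0$ provides, for every $f \in \Lpi_0$, a solution $h \in \Wpi$ to $L_C h = f$, so the CLT of Section \ref{sec:CLT} applies and identifies the asymptotic variance as $2\la f,h\rapi$, completing the proof.
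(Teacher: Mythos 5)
Your plan reproduces the $\R^d$ strategy: verify \textbf{(G1)}--\textbf{(G3)} for $S = L$, $A = -C\cdot\grad$ on $\cD = \Wpi_0$ and invoke Theorem \ref{th:ineqop}. This is \emph{not} what the paper does on the manifold, and the reason is exactly where your argument has a gap. For \textbf{(G3)} you assert that closedness of $L_{\pm C}$ on $\Wpi_0$ is ``immediate'' from Theorem \ref{th:onmanifold0}(4). But that item only states that the domain $D(G_C)$ of the generator of \eqref{eq:diff2} \emph{contains} $\Wpi$ and that $G_C = -L_C$ on $\Wpi$; it does not identify $D(G_C)$ with $\Wpi$ (contrast Theorem \ref{th:onRd0}(3), which pins the domain down exactly to $\Hpi$). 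The restriction of a closed operator to a possibly strictly smaller subspace need not be closed, and the paper explicitly flags this: ``we cannot say the same for $L_C$, since we would need to prove that it is closed on $\Wpi_0$ \dots\ The first point is not clear.'' Without closedness, a spectral gap of the semigroup generator would only give surjectivity from $D(G_C)$ onto $\Lpi_0$, not surjectivity of $L_C$ from $\Wpi_0$, which is what \textbf{(G3)} and the Poisson equation $L_C h = f$ with $h \in \Wpi$ actually require. Your further claim that Theorem~1 of \cite{HwangAD} ``transfers with no change'' is also at odds with the paper, which says it ``would require to adapt the proof'' and treats this as a real obstacle rather than bookkeeping.

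The paper sidesteps both difficulties by changing the Hilbert structure: it equips $\cH^1(\pi)_0$ with the Dirichlet inner product $[f,g] = \la \grad f, \grad g \rapi$ (equivalent to the usual $\cH^1(\pi)$ norm because $L$ has a spectral gap), proves that $T = iL^{-1} C\cdot\grad$ is a \emph{bounded} self-adjoint operator on $(\cH^1(\pi)_0, [\cdot,\cdot])$, and factors $L_C = L(I + iT)$. Boundedness of $T$ replaces all the delicate unbounded-operator issues (closedness of $L_C$, essential self-adjointness of $B$) by the elementary spectral theorem for bounded self-adjoint operators, immediately gives that $L_C$ is a bijection $\Wpi_0 \to \Lpi_0$ with inverse $(I+iT)^{-1}L^{-1}$, and yields the variance inequality by a direct spectral computation in $[\cdot,\cdot]$, bypassing Theorem \ref{th:ineqop} altogether. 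To make your outline rigorous you would either have to supply a genuine proof of closedness and of the $L_C$ spectral gap on $\Wpi_0$, or adopt this bounded-operator workaround.
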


\subsubsection{Proof of Theorem \ref{th:onmanifold0}}

As in Section \ref{sec:proofthonRd}, \eqref{eq:Lfg} and \eqref{eq:Cdotgrad} can be extended by density, and we have
\begin{equation} \label{eq:symmanif}
\la L f, g \rapi = \la \grad f, \grad g \rapi = \la f, L g \rapi, \quad f, g \in \Wpi
\end{equation}
and
\begin{equation} \label{eq:asymmanif}
\la C \cdot \grad f, g \rapi = - \la f, C \cdot \grad g \rapi, \quad f, g \in \cH^1(\pi).
\end{equation}
Let us summarize more properties of $L$ in the following result.

\begin{lemma} \label{lem:Lonmanif}
The operator $L$, with domain $\Wpi$, is self-adjoint and positive. It has a purely discrete spectrum consisting of eigenvalues. It is the generator of the heat semigroup $(T_t)$ on $\Lpi$.
\end{lemma}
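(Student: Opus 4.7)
The plan is to realize $L$ as a self-adjoint operator via Friedrichs' form representation theorem, identify its domain with $\Wpi$ by means of \eqref{eq:symmanif}, and then extract the discrete spectrum and the heat semigroup from compactness of the Rellich embedding on $M$.

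First I would introduce the bilinear form $Q(f,g) := \la \grad f, \grad g \rapi$, which by \eqref{eq:symmanif} is symmetric, positive, and, with domain $\cH^1(\pi)$, closed: its graph norm is precisely $\|f\|_\pi^2 + \|\grad f\|_\pi^2 = \|f\|_{\cH^1(\pi)}^2$, and $\cH^1(\pi)$ is a Hilbert space by definition. Theorem VIII.15 in \cite{RS1} then produces a unique self-adjoint positive operator $\tilde L$ on $\Lpi$ whose domain $D(\tilde L)$ consists of those $f \in \cH^1(\pi)$ for which $g \mapsto Q(f,g)$ extends to a bounded functional on $\Lpi$, and $\tilde L f$ is the element of $\Lpi$ representing this functional.

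Next I would identify $D(\tilde L) = \Wpi$ together with $\tilde L = L$ on this domain. If $f \in \Wpi$, density of $\Cinf(M)$ in $\cH^1(\pi)$ and \eqref{eq:symmanif} give $Q(f,g) = \la Lf, g \rapi$ for every $g \in \cH^1(\pi)$; since $Lf \in \Lpi$, this functional is $\Lpi$-bounded, so $f \in D(\tilde L)$ with $\tilde L f = Lf$. Conversely, for $f \in D(\tilde L)$, testing $Q(f,g) = \la \tilde L f, g \rapi$ against $g \in \Cinf(M)$ and integrating by parts identifies $Lf$ with $\tilde L f$ as distributions, so $Lf \in \Lpi$ and $f \in \Wpi$. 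To obtain the discrete spectrum, observe that since $M$ is compact and $U$ smooth (so $e^{\pm U}$ is bounded on $M$), $\cH^1(\pi)$ is norm-equivalent to the standard Sobolev space $H^1(M)$, and Rellich--Kondrachov yields a compact embedding $\cH^1(\pi) \hookrightarrow \Lpi$. Since $\la (L + I) f, f \rapi = Q(f,f) + \|f\|_\pi^2 \geq \|f\|_{\cH^1(\pi)}^2$, the resolvent $(L+I)^{-1}$ maps $\Lpi$ continuously into $\cH^1(\pi)$ and is therefore compact on $\Lpi$. The spectral theorem for compact self-adjoint operators then furnishes the purely discrete spectrum and a complete eigenbasis for $L$. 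Finally, functional calculus produces the strongly continuous contraction semigroup $T_t := e^{-tL}$ on $\Lpi$, whose generator is $-L$ on $\Wpi$, and this is the heat semigroup.

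The main obstacle is the domain identification: Friedrichs' theorem hands back an abstract domain, and one must check that it coincides with the concrete $\Wpi$. This hinges on the distributional definition of $Lf$ appearing in $\Wpi$ and on \eqref{eq:symmanif} extending to all of $\cH^1(\pi)$ by density of $\Cinf(M)$; once this is in place, the spectral and semigroup conclusions are standard.
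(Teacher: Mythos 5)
Your argument is correct. The paper's proof is a bare citation to Grigoryan's monograph (for self-adjointness, the discrete spectrum, and the semigroup identification), so your proof ``differs'' chiefly in being self-contained. The route you take --- the closed positive form $Q(f,g)=\la\grad f,\grad g\rapi$ on $\cH^1(\pi)$, the first representation theorem for the self-adjoint realization, the second representation theorem to identify the operator domain with $\Wpi$, the Rellich--Kondrachov embedding combined with the resolvent estimate $\|f\|_{\cH^1(\pi)}\leq\|(L+I)f\|_\pi$ to obtain compactness and hence a purely discrete spectrum, and spectral calculus for the heat semigroup --- is precisely the standard machinery that the cited results rest on, and you execute it correctly; it buys a verifiable proof in place of a reference. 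Two minor points. First, you invoke \eqref{eq:symmanif} for the asymmetric pairing $f\in\Wpi$, $g\in\cH^1(\pi)$, whereas the paper states it only for $f,g\in\Wpi$; this extension is valid because, for fixed $f\in\Wpi$ with $Lf\in\Lpi$, both $g\mapsto\la Lf,g\rapi$ and $g\mapsto\la\grad f,\grad g\rapi$ are $\cH^1(\pi)$-continuous and agree on the dense subspace $\Cinf$, but it deserves an explicit sentence. Second, the generator of $T_t=e^{-tL}$ is $-L$, as you write, while the lemma phrases it as ``$L$ is the generator of $(T_t)$''; this is only a sign-convention mismatch, not a mathematical issue.
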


\begin{proof}
The first two parts are Theorem 4.6 and 10.13 in \cite{GrigoryanBook}, and the last one is a direct consequence of Exercises 4.41 and 4.43 therein.
\end{proof}

This allows us to get the generator of our diffusions and complete the proof of Theorem \ref{th:onmanifold0}. To begin with, the first two parts of the result are just Theorem 1.1 and Prop 4.5 in Chapter 5 of \cite{IW}. Let $(G,D(G))$ and $(G_C,D(G_C))$ be the generators of \eqref{eq:diff} and \eqref{eq:diff2} on $\Lpi$. Clearly (see still \cite{IW}), $\Cinf \subset D(G)$, $\Cinf \subset D(G_C)$, and $G = L$, $G_C = L_C$ on $\Cinf$. By density of $\Cinf$ in $\Lpi$, the semigroup of $G$ is $(T_t)$ and thus has domain $\Wpi$ on $\Lpi$, by Lemma \ref{lem:Lonmanif}. Now, $D(G_C)$ is closed for the norm $\|\cdot\|_{G_C} := \normpi{\cdot} + \normpi{G_C \cdot}$, so contains in particular the closure of $\Cinf$ for this norm. But $\|\cdot\|_{G_C}$ is weaker than $\normpi{\cdot}$, and $\Cinf$ is dense in $\Wpi$ for the $\normpi{\cdot}$ norm, so in particular, $\Wpi \subset D(G_C)$.

\subsubsection{Proof of Theorem \ref{th:onmanifold}} \label{sec:proofthonmanifold}

As in Section \ref{sec:proofthonRd}, \eqref{eq:symmanif} and \eqref{eq:asymmanif} imply that our operators take values in $\Lpi_0$. As for Lemma \ref{lem:oponRd}, $L$, as an unbounded operator on $\Lpi_0$ with domain $\Wpi_0$ is thus positive, self-adjoint, and has a spectral gap. Hence, it is a bijection from $\Wpi_0$ onto $\Lpi_0$. However, we cannot say the same for $L_C$, since we would need to prove that it is closed on $\Wpi_0$ and has a spectral gap. The first point is not clear, while the second one would require to adapt the proof of Theorem 1 in \cite{HwangAD}. We shall instead use a workaround and change the spaces and inner products we consider. So let us consider $\cH^1(\pi)_0$ as a Hilbert space with inner product
\[
[f, g] = \la \grad f, \grad g \rapi.
\]
Note that the first equality of \eqref{eq:symmanif} can be extended by density, so that
\begin{equation} \label{eq:symmanif2}
[f, g] = \la L f, g \rapi, \quad f \in \Wpi_0, \; g \in \cH^1(\pi)_0.
\end{equation}
In particular,
\[
[f, f] \geq \l \la f, f \rapi, \quad f \in \Wpi_0
\]
where $\l > 0$ is the smallest eigenvalue of $L$ on $\Wpi_0$. Still by density, this last inequality holds for $f, g \in \cH^1(\pi)_0$, so that the norm induced by $[ \cdot, \cdot ]$ and the usual $\cH^1(\pi)$ norm are equivalent, and $\cH^1(\pi)_0$ with the $[ \cdot, \cdot ]$ inner product is a Hilbert space.

As we just noticed, $L$ is a bijection from $\Wpi_0$ to $\Lpi_0$, and in particular $T = i L^{-1} C \cdot \grad$ is well-defined as an operator on $\cH^1(\pi)_0$. It has the following properties.

\begin{lemma} 
The operator $T$ is bounded and self-adjoint on $\left ( \cH^1(\pi)_0, [ \cdot, \cdot ] \right )$.
\end{lemma}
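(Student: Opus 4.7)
The strategy is to reduce everything to the identity \eqref{eq:symmanif2}, which rewrites the new inner product as $[\phi,\psi] = \la L\phi, \psi\rapi$ whenever $\phi \in \Wpi_0$, and then to exploit the symmetry of $L$ together with the antisymmetry \eqref{eq:asymmanif} of $C \cdot \grad$ already in hand.

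First I would verify that $T$ actually maps $\cH^1(\pi)_0$ into $\Wpi_0 \subset \cH^1(\pi)_0$. Since $M$ is compact and $C$ is smooth, $|C|$ is bounded on $M$, so for $f \in \cH^1(\pi)_0$ we have $C \cdot \grad f \in \Lpi$ with $\normpi{C \cdot \grad f} \leq \|C\|_\infty [f,f]^{1/2}$; and taking $g = \un$ in \eqref{eq:asymmanif} shows that $C \cdot \grad f$ has $\pi$-mean zero. Since $L$ is a bijection from $\Wpi_0$ onto $\Lpi_0$ (as noted in the discussion following Lemma \ref{lem:Lonmanif}), it follows that $Tf = iL^{-1}(C \cdot \grad f)$ lies in $\Wpi_0$.

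For boundedness, set $h = Tf$, so $Lh = iC \cdot \grad f$. Then \eqref{eq:symmanif2} with $\phi = \psi = h$ gives $[h,h] = \la Lh, h\rapi = \la iC \cdot \grad f, h\rapi$, and Cauchy--Schwarz bounds this by $\|C\|_\infty [f,f]^{1/2} \normpi{h}$. Combining with the Poincar\'e-type bound $\normpi{h}^2 \leq \l^{-1}[h,h]$ noted just before the lemma yields $[h,h]^{1/2} \leq \l^{-1/2}\|C\|_\infty [f,f]^{1/2}$, so $T$ is bounded with operator norm at most $\l^{-1/2}\|C\|_\infty$.

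For self-adjointness, since $Tf \in \Wpi_0$, \eqref{eq:symmanif2} applies directly to give $[Tf, g] = \la L(Tf), g\rapi = -i \la C \cdot \grad f, g\rapi$. For $[f, Tg]$, where $f$ need not lie in $\Wpi_0$, I would use conjugate symmetry $[f, Tg] = \overline{[Tg, f]}$ together with \eqref{eq:symmanif2} applied to $[Tg, f]$, obtaining $[f, Tg] = i \la f, C \cdot \grad g\rapi$. The antisymmetry \eqref{eq:asymmanif}, which extends to the complexification since $\div(Ce^{-U}) = 0$ combined with the divergence theorem on the closed manifold $M$ makes the relevant boundary term vanish, then identifies the two expressions. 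The main subtlety throughout is simply keeping the sesquilinearity convention straight and ensuring that whenever \eqref{eq:symmanif2} is invoked the left argument lies in $\Wpi_0$; once those bookkeeping points are in place the argument is purely algebraic and I do not expect any deeper obstacle.
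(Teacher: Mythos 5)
Your proposal is correct and follows essentially the same route as the paper: reduce everything via \eqref{eq:symmanif2} to the $\Lpi$ inner product, use the antisymmetry \eqref{eq:asymmanif} for self-adjointness, and use boundedness of $C$ plus the spectral gap of $L$ for boundedness. The only small divergences are (i) you explicitly verify that $T$ maps $\cH^1(\pi)_0$ into $\Wpi_0$ and that the Hermitian symmetry of $[\cdot,\cdot]$ is what allows \eqref{eq:symmanif2} to be applied on the right slot, whereas the paper takes these for granted; and (ii) for the norm bound you use $[h,h]=\la iC\cdot\grad f,h\rapi$ together with Cauchy--Schwarz and the Poincar\'e bound $\normpi{h}^2\le\lambda^{-1}[h,h]$, giving $\|T\|\le\lambda^{-1/2}\|C\|_\infty$, while the paper bounds $[Tf,Tf]=\la C\cdot\grad f,L^{-1}C\cdot\grad f\rapi$ directly and gets the constant $m^{3}$ with $m=\max(\|C\|_\infty,\|L^{-1}\|_\pi)$ --- two minor variants of the same estimate. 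Both versions are sound; yours is if anything slightly more careful about the bookkeeping.
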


\begin{proof}
For $f,g \in \cH^1(\pi)_0$, we readily have from \eqref{eq:asymmanif} and \eqref{eq:symmanif2}, since $T f, T g \in \Wpi_0$,
\[
[ T f, g ] = - i \la C \cdot \grad f, g \rapi = i \la f, C \cdot \grad g \rapi
= \la f, L T g \rapi = [f , T g].
\]
Now, take
\[
m = \max \left ( \sup_{x \in M} |C(x)|, \|L^{-1}\|_{\pi} \right ).
\]
Then, by \eqref{eq:symmanif2} again,
\begin{align*}
[T f, T f] & = \la C \cdot \grad f, L^{-1} C \cdot \grad f \rapi \\
& \leq \| C \cdot \grad f \|_{\pi} \| L^{-1} C \cdot \grad f\|_{\pi} \\
& \leq m^3 \|\grad f\|^2_{\pi} \\
& = m^3 [f, f].
\end{align*}
Therefore, $T$ is bounded. Alternatively, one may just invoke the Hellinger-Toeplitz theorem.
\end{proof}

In particular, by the spectral theorem, we may define the resolvent $(I - L^{-1} C \cdot \grad)^{-1} 
= (I + i T)^{-1}$ on $\cH_0^1$. It enjoys the following property.

\begin{lemma} 
The operator $L - C \cdot \grad$ is a bijection from $\Wpi_0$ to $\Lpi_0$, 
with inverse $(I + i T)^{-1} L^{-1}$.
\end{lemma}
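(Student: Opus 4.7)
The plan is to verify the formal factorization $L - C \cdot \grad = L(I + iT)$ on $\Wpi_0$. Indeed, for $f \in \Wpi_0$, $iTf = -L^{-1}(C \cdot \grad f)$ makes sense in $\Wpi_0$, and applying $L$ gives $L(I + iT)f = Lf - C \cdot \grad f$. Hence the candidate inverse $R := (I + iT)^{-1} L^{-1}$ is natural; the work lies in checking the domains line up and in verifying both one-sided inverse identities.

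The subsidiary fact I would extract first is that $C \cdot \grad$ sends $\cH^1(\pi)$ into $\Lpi_0$. Boundedness of $C$ gives the image in $\Lpi$; for the zero mean, apply \eqref{eq:asymmanif} with $g = \un \in \cH^1(\pi)$ to obtain $\la C \cdot \grad u, \un \rapi = -\la u, C \cdot \grad \un \rapi = 0$. This is what justifies applying $L^{-1}$ to $C \cdot \grad h$ for any $h \in \cH^1(\pi)_0$.

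Next, for $f \in \Lpi_0$ set $g = L^{-1} f \in \Wpi_0$ and $h = (I + iT)^{-1} g \in \cH^1(\pi)_0$. Unfolding $(I + iT)h = g$ yields the identity
\[
h \;=\; g \,+\, L^{-1}(C \cdot \grad h),
\]
whose right-hand side lies in $\Wpi_0$ by the subsidiary fact. Thus $h \in \Wpi_0$, so $R$ actually maps $\Lpi_0$ into $\Wpi_0$. Applying $L$ to this identity gives $Lh = f + C \cdot \grad h$, i.e. $(L - C \cdot \grad)R f = f$. Conversely, for $h \in \Wpi_0$ put $f = (L - C \cdot \grad) h$, which lies in $\Lpi_0$ (both summands do, again by the subsidiary fact); then $L^{-1} f = h - L^{-1}(C \cdot \grad h) = (I + iT) h$, hence $R f = h$. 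This simultaneously establishes surjectivity, injectivity, and the inverse formula.

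The only delicate point is verifying that the output of $(I + iT)^{-1}$ lies in $\Wpi_0$ rather than merely in $\cH^1(\pi)_0$; the fixed-point-type identity $h = g + L^{-1}(C \cdot \grad h)$ resolves this by bootstrapping the regularity from $\cH^1(\pi)_0$ to $\Wpi_0$. Everything else is straightforward algebra on well-defined operators.
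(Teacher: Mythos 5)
Your proof is correct and follows the same route as the paper, namely the factorization $L - C\cdot\grad = L(I+iT)$ on $\Wpi_0$. You go further than the paper's very terse argument by explicitly handling the regularity bootstrap (that $(I+iT)^{-1}$ preserves $\Wpi_0$ via $h = g + L^{-1}(C\cdot\grad h)$), which the paper glosses over with ``and the result follows''; this is a genuine gap the paper leaves implicit and your treatment is the right way to fill it.
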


\begin{proof}
To begin with, note that $(I + i T)^{-1} L^{-1}$ is well-defined on $\Lpi_0$. On the other hand, one may write, on $\Wpi_0$,
\[
(L - C \cdot \grad) = L (I - L^{-1} C \cdot \grad) = L(I + i T)
\]
and the result follows.
\end{proof}

This allows to conclude as for the proof of Theorem \ref{th:onRd}. Let us however give an easier version of the computation of Section \ref{sec:maincomp}, using the $[\cdot, \cdot]$ inner product. Consider $L$ and $L_C$ as bijections from $\Wpi_0$ to $\Lpi_0$. Let $f \in \Lpi_0$ be real and $g = L^{-1} f$. By the spectral theorem for bounded operators, namely $T$ in $(\Hpi_0,[\cdot, \cdot])$, we can define $\mu_g$, the spectral measure of $T$ associated with $g$. Then
\begin{align*}
\sigma^2_C(f) & = 2 \la L_C^{-1} f, f \rapi \\
& = 2 \la (I + i T)^{-1} L^{-1} f, f \rapi \\
& = 2 \la (I + i T)^{-1} g, L g \rapi \\
& = 2 \left [ (I + i T)^{-1} g, g \right ] \\
& = 2 \int_{\sigma(T)} \frac{1}{1 + i y} \: \mu_g(dy) \\
& = 2 \int_{\sigma(T)} \frac{1}{1 + y^2} \: \mu_g(dy) \\
& \leq 2 \int_{\sigma(T)} 1 \: \mu_g(dy) \\
& = 2 [g,g] = 2 \la f, L^{-1} f \rapi = \sigma_0^2(f).
\end{align*}
Note that this avoids using Theorem \ref{th:ineqop}, and provides a slightly different formula for the asymptotic variances.

\section{Extensions} \label{sec:extensions}

Let us now still consider the diffusions \eqref{eq:diff} and \eqref{eq:diff2} on $M = \R^d$ 
or a compact Riemannian manifold under the assumptions of Theorem \ref{th:onRd} or \ref{th:onmanifold}. 
We have shown in the previous sections that these assumptions ensure that, for any $f \in \Lpi_0$,
\begin{equation} \label{eq:var}
\sigma_C^2(f) = 2 \int_{\sigma(B)} \frac{1}{1+y^2} \: \mu_g(dy) \leq
2 \int_{\sigma(B)} 1 \: \mu_g(dy) = 2 \|g\|^2_{\pi} 
= 2 \|L^{-1/2} f\|^2_{\pi} = \sigma_0^2(f),
\end{equation}
where $B = i L^{-1/2} (C \cdot \grad) L^{-1/2}$, $g = L^{-1/2} f$, and $\mu_g$ is the spectral measure of $B$ associated with the vector $g$. In this section, we will use Formula \eqref{eq:var} to derive more detailed results concerning the variance reduction. Because of the last remark in Section \ref{sec:proofthonmanifold}, we could also formulate them slightly differently in the compact manifold case. However, we shall only use Formula \eqref{eq:var} since it works on both $\R^d$ and a compact manifold.

\subsection{Equality}

Let us first give a condition for equality in \eqref{eq:var}.

\begin{cor} \label{cor:ineqop}
Under the assumptions of Theorem \ref{th:onRd} (resp. Theorem \ref{th:onmanifold}), we have $\sigma_C^2(f) = \sigma_0^2(f)$ if and only if $f \in L(\Ker(C \cdot \grad))$. In particular, $\sigma_C^2(f) < \sigma_0^2(f)$ for all nonzero $f \in \Lpi_0$ when $C \cdot \grad$ is injective on $\Hpi_0$ (resp. $\Wpi_0$).
\end{cor}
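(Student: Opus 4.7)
The plan is to read off the equality case directly from Formula \eqref{eq:var}. Subtracting gives
\[
\sigma_0^2(f) - \sigma_C^2(f) = 2 \int_{\sigma(B)} \frac{y^2}{1+y^2} \: \mu_g(dy),
\]
with $B = iL^{-1/2}(C\cdot\grad)L^{-1/2}$ and $g = L^{-1/2}f$ as in Section \ref{sec:maincomp}. Since the integrand is nonnegative and strictly positive off $\{0\}$, the equality $\sigma_C^2(f) = \sigma_0^2(f)$ is equivalent to the spectral measure $\mu_g$ being concentrated at $\{0\}$.

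Next I would convert this spectral condition into an algebraic one. By the functional calculus applied to the self-adjoint operator $B$, one has $\normpi{Bg}^2 = \int y^2 \: \mu_g(dy)$, so $\mu_g$ being concentrated at $\{0\}$ is the same as $Bg = 0$. Since $g = L^{-1/2}f$ lies in $\cR$, $B$ acts on $g$ by the explicit formula $iL^{-1/2}(C\cdot\grad)L^{-1/2}g$, and since $L^{-1/2}$ is a bijection from $\HC$ onto $\cR$, $Bg = 0$ is equivalent to $(C\cdot\grad)(L^{-1/2}g) = 0$. Using $L^{-1/2}g = L^{-1}f$, this becomes $L^{-1}f \in \ker(C\cdot\grad)$, i.e., $f \in L(\ker(C\cdot\grad))$. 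Note that $L^{-1}f$ automatically lies in $\Hpi_0$ (resp. $\Wpi_0$) by Lemma \ref{lem:oponRd} (resp. the discussion in Section \ref{sec:proofthonmanifold}), so the kernel condition is well-posed on the appropriate domain.

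The ``in particular'' clause is then immediate: if $C\cdot\grad$ is injective on $\Hpi_0$ (resp. $\Wpi_0$), its kernel there is trivial, hence $L(\ker(C\cdot\grad)) = \{0\}$, and equality forces $f = 0$. The main technical point to verify, and the only one requiring real care, is the identification ``$\mu_g$ concentrated at $\{0\}$ iff $Bg=0$'' for the a priori unbounded self-adjoint operator $B$. This is handled by observing that $g \in \cR$ already sits in the pre-closure domain on which $B$ is defined explicitly by $iV(C\cdot\grad)V$, so the spectral integral $\int y^2 \: \mu_g(dy)$ coincides with $\normpi{Bg}^2$ without any further domain juggling, and the equivalence follows.
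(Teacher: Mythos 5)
Your proof is correct and follows essentially the same route as the paper: both read the equality case off Formula~\eqref{eq:var} by observing that $\sigma_C^2(f)=\sigma_0^2(f)$ forces $\mu_g$ to sit at $\{0\}$, then translate this into $g\in\Ker B$ and hence $C\cdot\grad\,L^{-1}f=0$. The only cosmetic difference is that you pass through the identity $\int y^2\,\mu_g(dy)=\normpi{Bg}^2$ (valid since $g=L^{-1/2}f\in\cR$ lies in the domain of $B$), whereas the paper invokes the definition of the spectral measure directly; both give the same equivalence, and your handling of the domain point is sound.
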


\begin{proof}
From \eqref{eq:var}, $\sigma_C^2(f) = \sigma_0^2(f)$ if and only if $\mu_g$ puts all its mass at zero, which means, by definition (see \cite{RS1}), that $g \in \Ker(B)$, i.e. $C \cdot \grad L^{-1} f = 0$, whence the first part follows. The second part only uses that $L^{-1}$ has range $\Hpi_0$ (resp. $\Wpi_0$).
\end{proof}

Therefore, one would prefer a $C$ such that $\Ker(C \cdot \grad) = \{0\}$, even more considering the following results. However, we do not know how to find such a $C$, or even if one necessarily exists.

\subsection{Worst-case analysis}

A direct consequence of Theorems \ref{th:onRd} and \ref{th:onmanifold} is the worst-case analysis comparison
\begin{equation} \label{eq:worstcase0}
\sup_{\|f\|_{\pi} = 1} \sigma_C^2(f) \leq \sup_{\|f\|_{\pi} =1} \sigma_0^2(f),
\end{equation}
where the sup is over the real $f \in \Lpi_0$. Now, recall from \eqref{eq:var} that $\sigma_0^2(f) = 2 \la L^{-1} f, f \rapi$, so that the RHS of \eqref{eq:worstcase0} is $2 \|L^{-1}\|_{\pi}$. Since $L$ is self-adjoint on $\Hpi_0$ from Lemmas \ref{lem:LonRd} and \ref{lem:Lonmanif}, then $ \|L^{-1}\|_{\pi} = 1 / \l$, where $\l$ is the smallest eigenvalue of $L$. The following result provides a condition such that the irreversible algorithm performs strictly better than the reversible algorithm in the worst possible situation. It is worth mentioning that this result is similar to Theorem 1 in \cite{HwangAD}.

\begin{theorem}
Under the assumptions of Theorem \ref{th:onRd} or \ref{th:onmanifold}, if
\[
\Ker(L - \l) \cap \Ker(C \cdot \grad) = \{0\},
\]
then
\begin{equation} \label{eq:worstcase}
\sup_{\|f\|_{\pi} = 1} \sigma_C^2(f) < \sup_{\|f\|_{\pi} =1} \sigma_0^2(f) = \frac{2}{\l} .
\end{equation}
\end{theorem}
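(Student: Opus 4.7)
The plan is to argue by contradiction and reduce the worst-case comparison to the pointwise equality case already characterized in Corollary \ref{cor:ineqop}. The non-strict inequality in \eqref{eq:worstcase}, together with the identification of the right-hand side as $2/\l$, is already in hand from the preceding discussion, so the whole task is to exclude the possibility of equality.

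Suppose, for contradiction, that $\sup_{\normpi{f}=1} \sigma_C^2(f) = 2/\l$, and pick a maximizing sequence $(f_n) \subset \Lpi_0$ with $\normpi{f_n}=1$ and $\sigma_C^2(f_n) \to 2/\l$. The chain
\[
\sigma_C^2(f_n) \le \sigma_0^2(f_n) = 2\la L^{-1}f_n, f_n \rapi \le 2/\l
\]
yields $\la L^{-1}f_n, f_n \rapi \to 1/\l$. By Lemma \ref{lem:LonRd} (resp.\ Lemma \ref{lem:Lonmanif}), the spectrum of $L$ is purely discrete, so setting $\l_2 := \inf(\sigma(L) \setminus \{\l\}) > \l$ and letting $P$ denote the orthogonal projection of $\Lpi_0$ onto the finite-dimensional eigenspace $N := \Ker(L-\l)$, I would use the spectral decomposition of $L$ to obtain
\[
\la L^{-1}f_n, f_n \rapi \le \frac{1}{\l}\normpi{Pf_n}^2 + \frac{1}{\l_2}\normpi{(I-P)f_n}^2.
\]
Combined with $\normpi{Pf_n}^2 + \normpi{(I-P)f_n}^2 = 1$, this forces $\normpi{(I-P)f_n} \to 0$, and the finite-dimensionality of $N$ allows me to extract a subsequence along which $Pf_n \to f^* \in N$ with $\normpi{f^*}=1$, so that $f_n \to f^*$ in $\Lpi$.

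Since $L_C^{-1}$ is bounded on $\Lpi_0$ (via the spectral gap of Lemma \ref{lem:oponRd} in the $\R^d$ case, and via the factorization $L_C^{-1} = (I+iT)^{-1}L^{-1}$ in the manifold case), the quadratic functional $f \mapsto \sigma_C^2(f) = 2\la L_C^{-1}f, f\rapi$ is continuous, so $\sigma_C^2(f^*) = 2/\l = \sigma_0^2(f^*)$. Corollary \ref{cor:ineqop} then places $f^*$ in $L(\Ker(C\cdot\grad))$, meaning $L^{-1}f^* = f^*/\l \in \Ker(C\cdot\grad)$; hence $f^* \in N \cap \Ker(C\cdot\grad) = \{0\}$ by hypothesis, contradicting $\normpi{f^*}=1$.

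The main (and essentially only nontrivial) obstacle is upgrading the soft information $\la L^{-1}f_n, f_n \rapi \to 1/\l$ to strong convergence of $f_n$ to a unit vector of the first eigenspace $N$. This hinges crucially on the spectral gap of $L$ between $\l$ and $\l_2$ and on the finite-dimensionality of $N$, both of which are guaranteed by the discrete-spectrum statements already established for $L$.
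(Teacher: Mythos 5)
Your proposal is correct and follows essentially the same route as the paper: a maximizing sequence, spectral decomposition of $\Lpi_0$ into $\Ker(L-\l)$ and its orthogonal complement, the weighted-average bound $\la L^{-1}f_n, f_n\rapi \le \frac{1}{\l}\normpi{Pf_n}^2 + \frac{1}{\l_2}\normpi{(I-P)f_n}^2$, compactness from the finite-dimensional eigenspace, passage to the limit via continuity of $f \mapsto \la L_C^{-1}f,f\rapi$, and finally Corollary~\ref{cor:ineqop} to reach the contradiction. The only cosmetic differences are notational ($\l_2$ versus $\l+\delta$) and the order in which the subsequence is extracted relative to establishing $\normpi{(I-P)f_n}\to 0$.
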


\begin{proof}
Assume that there is equality in \eqref{eq:worstcase}, and recall that $\sigma_C^2(f) = 2 \la L_C^{-1} f, f \rapi$. Then there is a normalized sequence $(f_n)$ of real functions in $\Lpi_0$ such that
\begin{equation} \label{eq:deffn}
\la L_C^{-1} f_n, f_n \rapi \to \frac{1}{\l}.
\end{equation}
From Lemmas \ref{lem:LonRd} and \ref{lem:Lonmanif}, $L$ has a purely discrete spectrum with a complete family of eigenvalues, so that $\l > 0$ is an eigenvalue of $L$, $\Ker(L - \l)$ is finite-dimensional, and for some $\delta > 0$, the spectrum of $L$ restricted to $\Ker(L - \l)^{\perp}$ is bounded below by $\l + \delta$. In particular
\[
\left \| L^{-1}_{|\Ker(L-\l)^{\perp}} \right \|_{\pi} \leq \frac{1}{\l + \dl}.
\]

Let us write $f_n = f_n^1 + f_n^2$, with $f_n^1 \in \Ker(L - \l)$ and $f_n^2 \in \Ker(L - \l)^{\perp}$. Since $L^{-1}$ is self-adjoint, and by the preceding remark, we have
\begin{align*}
\la L^{-1} f_n, f_n \rapi & = \la L^{-1} f_n^1, f_n^1 \rapi + \la L^{-1} f_n^2, f_n^2 \rapi + 2 \la L^{-1} f_n^1, f_n^2 \rapi \\
& = \frac{1}{\l} \|f_n^1\|^2_{\pi} + \la L^{-1} f_n^2, f_n^2 \rapi + \frac{2}{\l} \la f_n^1, f_n^2 \rapi \\
& = \frac{1}{\l} \|f_n^1\|^2_{\pi} + \la L^{-1} f_n^2, f_n^2 \rapi \\
& \leq \frac{1}{\l} \|f_n^1\|^2_{\pi} + \frac{1}{\l + \delta} \|f_n^2\|^2_{\pi} \\
& = \frac{1}{\l} \|f_n^1\|^2_{\pi} + \frac{1}{\l + \delta} (1 - \|f_n^1\|^2_{\pi}).
\end{align*}
But, on the other hand,
\[
\la L^{-1} f_n, f_n \rapi \geq \la L_C^{-1} f_n, f_n \rapi
\]
by our main results, so that
\begin{equation} \label{eq:ineqfn}
\frac{1}{\l} \|f_n^1\|^2_{\pi} + \frac{1}{\l + \delta} (1 - \|f_n^1\|^2_{\pi}) \geq \la L_C^{-1} f_n, f_n \rapi.
\end{equation}
Now, since $\Ker(L - \l)$ is finite-dimensional and $(f_n^1)$ is bounded, $(f_n^1)$ converges up to some subsequence, which we do not write. Then, \eqref{eq:deffn} and \eqref{eq:ineqfn} imply that necessarily $\|f_n^1\|_{\pi} \to 1$, so that $f_n^2 \to 0$, and $f_n$ itself converges to some nonzero
\[
f = \lim f_n^1 \in \Ker(L - \l).
\]
Passing to the limit in \eqref{eq:deffn} finally implies that
\[
\la L_C^{-1} f, f \rapi = \frac{1}{\l}.
\]
Hence, $\sigma_C^2(f) = \sigma_0^2(f)$. By Corollary \ref{cor:ineqop}, this means $L^{-1} f = f / \l \in \Ker(C \cdot \grad)$. Hence, $f \in \Ker(C \cdot \grad) \cap \Ker(L - \l)$, which is not allowed by assumption.
\end{proof}

\subsection{Growing Perturbation}

Let us conclude with a result concerning the limiting behavior of the asymptotic variance as the magnitude of the perturbation goes to infinity. The idea of growing perturbation in diffusion acceleration goes back at least to \cite{HwangAGD}. In \cite{FrankeBSG}, the behavior of the spectral gap of the Laplace-Beltrami operator on a compact Riemannian manifold, perturbed by a growing drift, is investigated; see also \cite{HwangABM}. Define here $P$ to be the projection on $\Ker B$. By ``growing drift'', we mean that we will consider the behavior of $\sigma^2_{kC}(f)$ as $k \to \infty$.

\begin{theorem} \label{th:growing}
For any $f \in \Lpi$, $\s^2_{k C}(f)$ is decreasing in $k$ and
\[
\lim_{k \to \infty} \sigma^2_{kC}(f) = 2 \| P L^{-1/2} f \|^2.
\]
In particular, if $\Ker(C \cdot \grad) = \{0\}$, then we can push the asymptotic variance to 0, in that
\[
\lim_{k \to \infty} \sigma^2_{kC}(f) = 0.
\]
If $B$ has a spectral gap, then the asymptotic variance goes uniformly to 0, i.e.,
\[
\lim_{k \to \infty} \sup_{\|f\|_{\pi} = 1} \sigma^2_{kC}(f) = 0.
\]
\end{theorem}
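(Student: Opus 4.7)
The plan is to apply the spectral theorem to the self-adjoint closure of $B = i L^{-1/2}(C \cdot \grad) L^{-1/2}$ introduced in Section~\ref{sec:general}, exploiting that the substitution $C \mapsto kC$ simply replaces $B$ by $kB$. Since the spectral projections of $kB$ are obtained from those of $B$ by the rescaling $E_{kB}(A) = E_B(A/k)$, formula \eqref{eq:var} yields at once
\[
\sigma^2_{kC}(f) = 2 \int_{\sigma(B)} \frac{1}{1 + k^2 y^2} \, \mu_g(dy),
\]
where $g = L^{-1/2} f$ and $\mu_g$ is the spectral measure of $B$ associated with $g$. All four assertions of the theorem will then follow by reading off properties of the integrand.

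Monotonicity is immediate, since $1/(1+k^2y^2)$ is pointwise decreasing in $k \geq 0$ for each fixed $y$. For the limit, the integrand converges pointwise to $\mathbf{1}_{\{0\}}(y)$ and is dominated by $1 \in L^1(\mu_g)$, so dominated convergence gives
\[
\lim_{k \to \infty} \sigma^2_{kC}(f) = 2 \mu_g(\{0\}) = 2 \|P g\|^2_{\pi} = 2 \|P L^{-1/2} f\|^2_{\pi},
\]
where $P = \mathbf{1}_{\{0\}}(B)$ is the orthogonal projection onto $\Ker B$, as desired.

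For the third claim, I would translate the hypothesis $\Ker(C \cdot \grad) = \{0\}$ into $P = 0$ along the lines of the proof of Corollary~\ref{cor:ineqop}: since $V = L^{-1/2}$ is a bijection from $\cR$ onto the domain of $L$ and $B = iV(C \cdot \grad)V$ on $\cR$, injectivity of $C \cdot \grad$ there forces $\Ker B$ (and hence the spectral projection at $\{0\}$) to be trivial. For the last claim, a spectral gap $|y| \geq \delta$ on $\sigma(B)$ gives
\[
\sigma^2_{kC}(f) \leq \frac{2 \|L^{-1/2} f\|^2_{\pi}}{1 + k^2 \delta^2} \leq \frac{2}{\lambda (1 + k^2 \delta^2)} \|f\|^2_{\pi},
\]
with $\lambda > 0$ the smallest eigenvalue of $L$ on $\Lpi_0$, yielding uniform convergence on the unit sphere.

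The main delicate point is not the integral manipulation but the rigorous identification of $\Ker B$ (after passing to the self-adjoint closure) with the kernel of $C \cdot \grad$ on $\cD$: a priori the closure could enlarge the kernel. I would handle this precisely as in Corollary~\ref{cor:ineqop}, invoking the essential self-adjointness from Lemma~\ref{lem:esa} so that the spectral projection at $\{0\}$ really does correspond to $\Ker(C \cdot \grad) \cap \cD$ via the bijection $V$.
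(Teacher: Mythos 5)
Your proposal is correct and follows essentially the same route as the paper: both rely on the rescaled spectral formula $\sigma^2_{kC}(f)=2\int(1+k^2y^2)^{-1}\dd\mu_g$, bounded (dominated) convergence for the limit, identification of $\mu_g(\{0\})$ with $\|Pg\|^2$, and the spectral-gap bound for uniform decay; you additionally spell out the (elementary) monotonicity in $k$, which the paper leaves implicit. Your closing caveat about $\Ker$ of the self-adjoint closure of $B$ versus $\Ker(C\cdot\grad)$ on $\cD$ is a legitimate subtlety, but note the paper's own argument (``$L^{-1/2}$ is injective, so $B$ is injective iff $C\cdot\grad$ is'') is no more detailed on that point than yours.
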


It is worth contrasting this result with those of \cite{FrankeBSG,HwangAGD,HwangABM}. In \cite{HwangAGD}, it is shown that the spectral gap does not need to be increasing in $k$, and that the smallest spectral gap can be attained for some finite $k$. In \cite{HwangABM}, antisymmetric perturbations of the Laplace-Beltrami operator on the torus are considered, and the authors prove that the spectral gap as a finite limit as $k \to \pinf$, but can be made arbitrarily close to 0 for well-chosen $C$. In \cite{FrankeBSG}, the limit of the spectral gap as $k \to \pinf$ is investigated, and shown to be infinite, except if a very strict condition is verified. We ignore whether the condition that $B$ has a spectral gap in our result is restrictive.

\begin{proof}
Note first that the results of Theorem \ref{th:onRd} and \ref{th:onmanifold} obviously apply to $k C$. By \eqref{eq:var}, we have that
\[
\sigma^2_{k C}(f) = 2 \int_{\sigma(B)} \frac{1}{1 + k^2 y^2} \mu_{g}(dy)
\]
with $g = L^{-1/2} f$. Hence, by bounded convergence,
\[
\lim_{k \to \infty} \sigma^2_{k C}(f) = 2 \mu_g(\{0\}),
\]
and $\mu_g(\{0\})$, by definition, is just the squared norm of the projection of $g$ on $\Ker B$, i.e. of $P g = P L^{-1/2} f$. Since $L^{-1/2}$ is injective, $B$ is injective if and only if $C \cdot \grad$ is, and hence the second part of the result follows. Finally, if $B$ has a spectral gap, then $\mu_g$ has no mass in a neighborhood of 0, say $[-\delta,\delta]$, so that
\begin{align*}
\sigma^2_{k C}(f) & = 2 \int_{\sigma(A)} \frac{1}{1 + k^2 y^2} \: \mu_{g}(dy) \\
& \leq \frac{2}{1 + k^2 \delta^2} \int 1 \: \mu_g(\ddy) \\
& = \frac{2}{1 + k^2 \delta^2} \|g\|_{\pi}^2 \\
& \leq \frac{2}{1 + k^2 \delta^2} \|L^{-1/2}\|^2_{\pi} \|f\|^2_{\pi},
\end{align*}
and the last part of the statement follows.
\end{proof}

\section{Conclusion} \label{sec:concl}

Our results mainly provide qualitative information on the asymptotic variance, with less quantitative information due to the difficulty encountered in the manipulation of spectral measures. Of course, within the family of all possible choices of $C$ (with unit norm, say), one would like the one that gives the smaller asymptotic variance, and maybe even the value of this lower bound. The theoretical existence and practical construction of such a perturbation $C$ would of course be of great interest. In a similar direction, the results of Section \ref{sec:extensions} show that a $C$ with $\Ker(C \cdot \grad) = \{0\}$ is preferable. As mentioned, we do not know if such a $C$ always exists, and, even more to the point, if it can be constructed. The same goes for finding a $B$ with a spectral gap as in Theorem \ref{th:growing}.

In the case of an Ornstein-Uhlenbeck process, where $U$ is quadratic, it is reasonable to only consider $C$ of the form $Q \grad U$, for an antisymmetric matrix $Q$. The existence of a best $C$ of this type is then obvious, and it would be interesting to get a closed form for it. However, one cannot have $\Ker(C \cdot \grad) = \{0\}$, so that $\s^2_{k C}(f)$ has a positive limit as $k \to \pinf$. A closed form for this limit might shed some light on the open questions considered above.

\paragraph{Acknowledgments} The authors would like to thank Brice Franke for fruitful discussions and Yi-Ching Yao for his comments.

\bibliographystyle{abbrv}
\bibliography{Bibli}

\end{document}